\theoremstyle{definition}
\newtheorem{definition}{Definition}[section]
\theoremstyle{plain}
\newtheorem{Theorem}[definition]{Theorem}
\newtheorem{Proposition}[definition]{Proposition}
\newtheorem{Lemma}[definition]{Lemma}
\newtheorem{Definition}[definition]{Definition}
\theoremstyle{remark}
\newtheorem{remark}[definition]{Remark}
\newcommand{\R}{\mathbb R}  
\newcommand{\N}{\mathbb N} 
\newcommand{\Vol}{\mathrm{Vol}}
\newcommand{\eps}{\varepsilon}
\newcommand{\vphi}{\varphi}
\newcommand{\Ric}{\mathrm{Ric}}
\newcommand{\comp}{\Subset}
\newcommand{\X}{\mathfrak{X}}
\newcommand{\sse}{\subseteq}
\newcommand{\Om}{\Omega}
\newcommand{\Dpk}{\mathcal{D}'{}^{(k)}}
\newcommand{\Dpo}{\mathcal{D}'{}^{(1)}}
\newcommand{\trsm}{\mathcal{T}^r_s(M)}
\newcommand{\ltl}{L^2_{\mathrm{loc}}}
\newcommand{\D}{\mathcal{D}}
\newcommand{\cinfty}{C^\infty}
\newcommand{\lara}[2]{\langle #1, #2 \rangle}
\newcommand{\om}{\omega}
\newcommand{\lpl}{L^p_{\mathrm{loc}}}
\newcommand{\pac}{P^{\mathrm{ac}}_2}
\newcommand{\supp}{\mathrm{supp}}
\newcommand{\vol}{\mathrm{vol}}
\newcommand{\cJ}{\mathcal{J}}
\title{Synthetic versus distributional lower Ricci curvature bounds}
\author{Michael Kunzinger\footnote{University of Vienna, Faculty of Mathematics, 
michael.kunzinger@univie.ac.at}, 
Michael Oberguggenberger\footnote{University of Innsbruck, Unit of Engineering Mathematics, michael.oberguggenberger@uibk.ac.at},
James A.\ Vickers\footnote{University of Southampton, School of Mathematics, J.A.Vickers@soton.ac.uk}
}
\begin{document}

\date{}


\maketitle

\begin{abstract}
We compare two standard approaches to defining lower Ricci curvature bounds for Riemannian metrics of regularity below $C^2$.
These are, on the one hand, the synthetic definition via weak displacement convexity of entropy functionals in the 
framework of optimal transport, and the distributional one based on non-negativity of the Ricci-tensor in the sense of Schwartz.
It turns out that distributional bounds imply entropy bounds for metrics of class $C^1$ and that the converse holds for
$C^{1,1}$-metrics under an additional convergence condition on regularisations of the metric.

\vskip 1em

\noindent
\emph{Keywords: Ricci curvature bounds, optimal transport, low regularity, tensor distributions, synthetic geometry} 
\medskip

\noindent 
\emph{MSC2020: 49Q22, 53C21, 46T30} 
\end{abstract}

\section{Introduction}

Applications of the theory of optimal transport to Riemannian geometry have had a transformative influence on the field and have led
to far-reaching generalisations of classical notions of curvature. In particular, lower Ricci-curvature bounds have been re-cast
into convexity conditions on certain entropy functionals, which make sense in settings that go far beyond the original field
of Riemannian geometry. Indeed, such curvature (or curvature-dimension) conditions in the sense of Lott-Villani and Sturm can be formulated in general metric measure spaces,
where no differentiable structure is available a priori (cf.\ \cite{LV09,Sturm1,Sturm2}, as well as the 
introductory texts \cite{Vil09,AG}). For Riemannian metrics of regularity $C^2$, 
these synthetic formulations of lower Ricci curvature bounds are known to be equivalent to the classical (pointwise) estimates 
(\cite{CMcCS,vRS}, see also \cite{Vil09}). 

In particular, synthetic Ricci curvature bounds can be imposed on Riemannian metrics whose regularity lies strictly below $C^2$. On the
other hand, for such metrics there is another, more analytic, way of making sense of curvature bounds, namely by calculating the 
curvature quantities directly in the space of Schwartz distributions and then imposing distributional inequalities as curvature bounds. 
Also this approach reduces to the classical pointwise estimates as soon as the regularity of the metric is at least $C^2$. Especially
in physics, be it in classical electrodynamics, general relativity, or quantum field theory, this analytic approach is widely used
to model singular sources or fields or to describe matched spacetimes (cf., e.g., \cite{BGP,SV} and references therein). In recent years, 
distributional Ricci bounds (in the shape of strong energy conditions) have also featured prominently in the generalisation of the 
classical singularity theorems of Penrose and Hawking to spacetime metrics of regularity below $C^2$ (\cite{penrosec11,hawkingc11,GGKS,G20,KOSS22}).
On the synthetic side, a generalisation of Hawking's singularity theorem to Lorentzian synthetic spaces was established by Cavalletti and Mondino in \cite{CM22}.

A natural question arising in this context is whether the synthetic and the distributional approach to lower Ricci curvature bounds
continue to agree for metrics of regularity strictly below $C^2$. We analyse this problem for Riemannian metrics of class $C^1$ and $C^{1,1}$
on compact manifolds, comparing distributional Ricci bounds with $\infty$-Ricci bounds in the sense of Lott-Villani (\cite{LV09,Sturm1,Sturm2}).
Our main results are that, on the one hand, distributional lower Ricci bounds imply lower $\infty$-Ricci bounds for $C^1$-metrics (Theorem \ref{th:dist_to_synth}),
and that, conversely, lower $\infty$-Ricci bounds on a $C^{1,1}$-Riemannian metric imply the corresponding distributional bounds
under an additional convergence condition on regularisations of the metric (Theorem \ref{th:synth_to_dist}). 

Technically, our approach
rests on a characterization of distributional curvature bounds via regularisation (Theorem \ref{th:Ricci_bounds_approx}), and on
a refined study of properties of the exponential map of a $C^{1,1}$-metric due to  Minguzzi (\cite{Min15}). The latter in particular
makes it possible to directly generalise a number of essential properties of optimal transport on Riemannian manifolds (as laid out in McCann's fundamental work
\cite{McC01}) from the regularity class $C^2$ to $C^{1,1}$. This is the content of Section \ref{sec:fundamentals_opt_transp}. In Section
\ref{sec:dist_curv_bounds} we give a brief overview of distributional Riemannian geometry and curvature bounds in this setting. 
The regularisation results we require are derived in Section \ref{sec:dist_bounds_via_reg}. The remaining Sections \ref{sec:dist_to_synth}
and \ref{sec:synth_to_dist} are devoted to proving the main results stated above.

\section{Fundamentals of optimal transport for $C^{1,1}$-metrics}\label{sec:fundamentals_opt_transp}
In this section we closely follow the fundamental paper \cite{McC01} and show that its results carry over unchanged from Riemannian metrics
of regularity $C^2$ to those of class $C^{1,1}$. Let $M$ be a compact connected
Riemannian manifold without boundary. We shall assume $M$ to be $C^\infty$-smooth, but all results derived below hold for $C^{2,1}$-manifolds 
as well. Indeed, by \cite[Th.\ 2.9]{H76}, for any $C^k$-manifold ($k\ge 1$) there exists a unique $C^k$-compatible $C^\infty$-atlas on $M$.
We are studying the Monge problem for probability measures $\mu, \nu$ on $M$ with cost function $c(x,y)=d^2(x,y)/2$, where $d$ is 
the Riemannian distance on $M$. Thus we are looking for a map $S:M\to M$ that minimises the transportation cost
\begin{equation}\label{eq:transp_cost}
\mathcal{C}(S) = \int_M c(x,S(x))\,d\mu(x)
\end{equation}
among all Borel maps $S$ that push forward $\mu$ to $\nu$, $S_\# \mu = \nu$, and we call the set of these maps $\mathcal{S}(\mu,\nu)$. 
For any $\psi: M\to \R\cup \{\pm\infty\}$ we define
its \emph{infimal convolution} by 
\[
\psi^c(y) := \inf_{x\in M} \big(c(x,y) -\psi(x) \big).
\]
The dual Kantorovich problem consists in finding $(\psi,\phi)$ maximising
\begin{equation}\label{eq:Kantorovich}
J(\psi,\phi) = \int_M \psi(x)\,d\mu(x) + \int_M \phi(y)\,d\nu(y)
\end{equation}
over the set $\mathrm{Lip}_c := \{(u,v) \mid u,v:M\to \R \ \mathrm{continuous},\ \forall (x,y):\ u(x)+v(y) \le c(x,y)\}$.

As a preparation for the following results, we collect some general properties of the exponential map and 
the distance function of a $C^{1,1}$-Riemannian metric. 
By \cite[Th.\ 4]{Min15} (or also \cite[Th.\ 4.1]{KSS14}), any $x\in M$ possesses a convex normal neighbourhood $U$ (called a totally
normal neighbourhood in \cite{KSS14}). This means that for any $y\in U$, $U$ is a normal neighbourhood of $y$, i.e., $\exp_y$ is a bi-Lipschitz
homeomorphism from a star-shaped open neighbourhood of $0$ in $T_yM$ onto $U$. In particular, Rademacher's theorem (cf.\ \cite[Lem.\ 4]{McC01} 
for the version we use here) implies that $\exp_y^{-1}$ is differentiable almost everywhere on its domain, as is $\exp_y$. 
By \cite[Th.\ 6]{Min15}, the unique shortest absolutely continuous curve $\sigma$ in $M$ from $y\in U$ to $x$ is (has a reparametrization as) the 
radial geodesic $\sigma:[0,1]\to M$, $\sigma(t) = \exp_y(t\cdot \exp_y^{-1}x)$. In particular, for the Riemannian distance of $x$ and $y$
we obtain  
\begin{equation}\label{eq:distance_exp}
d(x,y)= |\exp_y^{-1}x|_y.
\end{equation}
Covering a minimising curve between points that are not necessarily contained in a common convex normal neighbourhood
by such neighbourhoods and applying the above we conclude that also in this case the curve has a reparametrization as an unbroken geodesic,
hence in particular of regularity $C^{2,1}$ (as follows directly from the geodesic equation).

Furthermore, for any $x\in M$, $v\mapsto \exp_x v$ is strongly differentiable (in the sense of Peano, cf.\ \cite[Def.\ 2]{Min15}) at $v=0$ with 
strong differential $T_0\exp_x = \mathrm{id}_{T_xM}$ (\cite[Th.\ 3 and Sec.\ 2.3]{Min15}). By Leach's inverse function theorem (cf.\ \cite[Th.\ 2]{Min15}),
also $z\mapsto\exp_x^{-1}(z)$ is strongly differentiable at $z=x$ with strong differential $\mathrm{id}_{T_xM}$. In particular, the corresponding
statements with standard differentials are valid as well.  
Also the Gauss Lemma holds at every point $x\in U$ where $\exp_y$ is differentiable (\cite[Th.\ 5]{Min15}): for such $x$,
$v_1 := \exp_y^{-1}(x)$ and any $v_2\in T_yM\cong T_{v_1}(T_yM)$ we have:
\begin{equation*}\label{eq:gauss}
g(T_{v_1}\exp_y(v_1),T_{v_1}\exp_y(v_2)) = g(v_1,v_2).
\end{equation*}
Finally, it is proved in (\cite[Th.\ 5]{Min15}) that the map $D_y^2:=x\mapsto g(\exp_y^{-1}x,\exp_y^{-1}x)$ is of differentiability class 
$C^{1,1}$ on $U$, with tangent map 
\begin{equation}\label{eq:D_tangent}
T_x D^2_y = 2g(\dot\sigma(1),\,.\,),
\end{equation}
where $\sigma(t)=\exp_y(t\cdot \exp_y^{-1}x)$ is as above, and 
$(y,x)\mapsto P(y,x):=\dot\sigma(1)$ is the position vector field of $x$ with respect to $y$.

Based on these results we can now extend the validity of \cite[Prop.\ 6]{McC01} to Riemannian metrics $g\in C^{1,1}$:
\begin{Proposition}\label{prop:superdifferentiable}
Let $(M,g)$ be a $C^{1,1}$-Riemannian manifold (with or without boundary). 
Let $y\in M$ and set $\phi: M\to \R$, $\phi(x):= d^2(x,y)/2$. Then:
\begin{itemize}
\item[(i)] There exists a neighbourhood $U$ of $y$ such that $\phi$ is differentiable at 
every point in $U$.
\item[(ii)] For each $x\in M$, if there exists a distance-realizing absolutely continuous curve
$\sigma$ from $y$ to $x$, then $\sigma$ can be parametrized as
a geodesic $\sigma: [0,1]\to M$ and $\phi$ has supergradient $\dot\sigma(1)\in \bar\partial \phi_x$ at $x$. 
\end{itemize}
\end{Proposition}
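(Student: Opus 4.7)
For part (i), I would take $U$ to be a convex normal neighbourhood of $y$, whose existence is guaranteed by \cite[Th.\ 4]{Min15}. By equation (\ref{eq:distance_exp}), for every $x\in U$ we have $2\phi(x) = |\exp_y^{-1}(x)|_y^2 = D_y^2(x)$. Since $D_y^2$ is of class $C^{1,1}$ on $U$ by the regularity result of \cite[Th.\ 5]{Min15} recalled just above the proposition, this immediately gives $\phi\in C^{1,1}(U)$, and in particular $\phi$ is differentiable on $U$.

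For part (ii), I would first invoke the observation in the excerpt that any distance-realizing absolutely continuous curve from $y$ to $x$ admits a reparametrization as an unbroken geodesic of class $C^{2,1}$, so that I may assume $\sigma\colon[0,1]\to M$ is such a geodesic with $\sigma(0)=y$ and $\sigma(1)=x$. Set $v:=\dot\sigma(1)\in T_xM$ and $r_0:=|v|_x = d(y,x)$. The degenerate case $r_0 = 0$ (so $y=x$ and $v=0$) follows at once from $\phi(z)=d^2(x,z)/2 = O(d(x,z)^2) = o(d(x,z))$, which yields $0\in\bar\partial\phi_x$; so the interesting case is $r_0 > 0$.

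The main idea for $r_0>0$ is to slide slightly back along $\sigma$ and exploit the local $C^{1,1}$-regularity of the squared distance established in part (i) applied at the backed-up basepoint. I would fix a small $s>0$ and set $x_s:=\sigma(1-s)$. The minimizing property of $\sigma$ gives $d(y,x_s)=(1-s)r_0$, and for $s$ small enough $x$ lies in a convex normal neighbourhood of $x_s$ on which $D_{x_s}^2$ is $C^{1,1}$. Since the radial geodesic from $x_s$ to $x$ is a reparametrization of $\sigma\vert_{[1-s,1]}$ and ends with velocity $sv$, equation (\ref{eq:D_tangent}) applied at $x_s$ yields $T_x D_{x_s}^2 = 2s\, g(v,\,\cdot\,)$. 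Lipschitz continuity of this differential then gives, for $z=\exp_x(w)$ with $|w|$ small,
\[
d(x_s,z)^2 \;=\; D_{x_s}^2(z) \;=\; s^2 r_0^2 + 2s\, g(v,w) + O(|w|^2),
\]
with a constant that may depend on $s$.

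To finish, I would combine this expansion with the triangle inequality $d(y,z)\le (1-s)r_0 + d(x_s,z)$. Taking a square root (licit because $r_0>0$) gives $d(x_s,z) = s r_0 + g(v,w)/r_0 + o(|w|)$, and hence $d(y,z)\le r_0 + g(v,w)/r_0 + o(|w|)$. Squaring, dividing by two and retaining only leading-order terms produces
\[
\phi(z) \;\le\; \phi(x) + g(v,w) + o(|w|),
\]
which is exactly the supergradient condition $v\in\bar\partial\phi_x$. The delicate point, and the main obstacle, is the order of the limits and the $s$-bookkeeping: one must first fix $s>0$ and only then let $w\to 0$, so that the $s$-dependence of the constant in the $C^{1,1}$-expansion of $D_{x_s}^2$ is absorbed harmlessly into $o(|w|)$. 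The one ingredient beyond McCann's $C^2$-argument is thus Minguzzi's $C^{1,1}$-regularity of $D_{x_s}^2$ together with the explicit formula (\ref{eq:D_tangent}) for its differential, both collected in the excerpt.
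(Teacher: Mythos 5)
Your argument is correct and follows essentially the same route as the paper's: both adapt McCann's argument for Prop.~6 by choosing a point on $\sigma$ lying in a convex normal neighbourhood of $x$ (your $x_s=\sigma(1-s)$ plays exactly the role of the paper's $z$), exploiting Minguzzi's $C^{1,1}$-regularity of the squared-distance function from that point via \eqref{eq:D_tangent}, and closing with the triangle inequality $d(y,z)\le d(y,x_s)+d(x_s,z)$; the paper phrases this through superdifferentiability of $d(y,\cdot)$ plus McCann's chain rule, whereas you square and un-square directly, but these are cosmetic variants of the same computation. One small imprecision: the remainder in $D_{x_s}^2(\exp_x w)=s^2r_0^2+2s\,g(v,w)+\dots$ is only $o(|w|)$ rather than $O(|w|^2)$, because $\exp_x$ is merely \emph{strongly} differentiable at $0$ (not $C^{1,1}$ near $0$), so $\exp_x w = x + w + o(|w|)$; since you only invoke the $o(|w|)$ estimate in the subsequent square-root and squaring steps, this does not affect the correctness of the argument.
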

\begin{proof} (i) Pick a convex normal neighbourhood $U\subseteq M$ of $y$. Then for any $x\in U$, by \eqref{eq:distance_exp} we have
$\phi(x) = \frac{1}{2} D^2_y(x)$, so \eqref{eq:D_tangent} gives $\nabla\phi(x) = \dot\sigma(1)$.
Using $d(x,y) = \sqrt{2\phi(x)}$ we conclude that $\nabla_x d(x,y) = \dot \sigma(1)/|\dot\sigma(1)|_x$
for $x\not=y$.

(ii) This now follows exactly as in \cite[Prop.\ 6]{McC01}. For the reader's convenience we include the argument. Suppose that $y$ does not 
lie in a convex normal neighbourhood $U$ around $x$ and let $\sigma$ be a minimising a.c.\  curve from $y$ to $x$. By what was said above
$\sigma$ possesses a parametrization as an unbroken geodesic $[0,1]\to M$ with $y=\sigma(0)$ and $x=\sigma(1)$. Pick $z\ne x$ on $\sigma$
such that  $z\in U$. Then (i) gives $\nabla_x d(x,z) = \dot \sigma(1)/|\dot\sigma(1)|_x$. Since $T_0\exp_x = \mathrm{id}_{T_xM}$, for any $v\in T_xM$ 
we have
\[
T_0(v\mapsto d(\exp_x v,z)) = g(\dot\sigma(1),\,.\,)/|\dot\sigma(1)|_x,
\]
which combined with the triangle inequality and the fact that $z$ lies on $\sigma$ leads to
\begin{align*}
d(y,\exp_x v) &\le d(y,z) + d(z,x) + g(\dot\sigma(1),v)/|\dot\sigma(1)|_x + o(|v|_x)\\
&= d(y,x) + g(\dot\sigma(1),v)/|\dot\sigma(1)|_x + o(|v|_x).
\end{align*}
This shows that $d(y,\,.\,) = \sqrt{2\phi}$ is superdifferentiable at $x$ and the chain rule \cite[Lem.\ 5]{McC01} finally
gives $\dot\sigma(1) \in \bar\partial \phi_x$, as claimed.
\end{proof}
Proposition \ref{prop:superdifferentiable} is the key to transferring all further results from \cite[Sec.\ 3]{McC01} to $C^{1,1}$-metrics.
We begin with \cite[Lem.\ 7]{McC01}:
\begin{Lemma}\label{lem:subdiff} Let $M$ be a connected closed Riemannian manifold with $g\in C^{1,1}$ and let $\psi: M\to \R$, $\psi=\psi^{cc}$. Then
$c(x,y) -\psi(x) -\psi^c(y) \ge 0$ for all $x,y\in M$, and if $\psi$ is differentiable in $x$ then equality holds if and only if
$y=\exp_x(-\nabla\psi(x))$.
\end{Lemma}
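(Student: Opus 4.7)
The plan is to mirror the proof of \cite[Lem.\ 7]{McC01}, substituting its $C^2$ input by the superdifferentiability statement of Proposition \ref{prop:superdifferentiable}, which is precisely what is needed in the $C^{1,1}$ setting. The inequality $c(x,y) - \psi(x) - \psi^c(y) \ge 0$ is immediate from the definition of $\psi^c$ as an infimal convolution, since $\psi^c(y) \le c(x,y) - \psi(x)$ by the very definition of infimum. The substance of the lemma lies in the equality characterisation, which I split into necessity and sufficiency.

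For necessity, suppose $\psi$ is differentiable at $x$ and $\psi(x) + \psi^c(y) = c(x,y)$. Then $x$ attains the infimum defining $\psi^c(y)$, so
\[
\psi(z) - \psi(x) \le c(z,y) - c(x,y) \qquad \text{for every } z \in M.
\]
Setting $z = \exp_x(v)$ for small $v \in T_xM$, I would estimate the right-hand side from above using Proposition \ref{prop:superdifferentiable}(ii): since $M$ is compact, a distance-realizing geodesic $\sigma: [0,1] \to M$ from $y$ to $x$ exists (using the observation made in the preliminary discussion about covering by convex normal neighbourhoods), and $\dot\sigma(1)$ is a supergradient of $c(\cdot,y) = d^2(\cdot,y)/2$ at $x$. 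Combining this with the first-order expansion of $\psi$ at $x$ and testing with both $v$ and $-v$ would yield $\nabla\psi(x) = \dot\sigma(1)$. Reversing $\sigma$ then gives $y = \exp_x(-\dot\sigma(1)) = \exp_x(-\nabla\psi(x))$.

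For sufficiency, fix $y = \exp_x(-\nabla\psi(x))$. The hypothesis $\psi = \psi^{cc}$ reads $\psi(x) = \inf_{\tilde y \in M}\bigl(c(x,\tilde y) - \psi^c(\tilde y)\bigr)$. Since $c(x,\cdot)$ is continuous and $\psi^c$ is upper semicontinuous as an infimum of continuous functions on a compact space, the map $\tilde y \mapsto c(x,\tilde y) - \psi^c(\tilde y)$ is lower semicontinuous and attains its infimum at some $y^* \in M$. Thus $\psi(x) + \psi^c(y^*) = c(x, y^*)$, and the necessity direction already established forces $y^* = \exp_x(-\nabla\psi(x)) = y$, so equality holds at $(x,y)$.

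The main obstacle, conceptually, is the possible non-differentiability of $c(\cdot,y)$ at $x$ when $y$ is in the cut locus of $x$, where the classical $C^2$ argument relies on two-sided differentiability. Proposition \ref{prop:superdifferentiable} neatly sidesteps this: the one-sided supergradient estimate it provides is exactly what can be matched against the two-sided first-order expansion of the smooth function $\psi$, so that the rest of the argument reproduces McCann's reasoning verbatim once this ingredient is in hand.
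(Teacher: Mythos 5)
Your proposal is correct and takes essentially the same approach as the paper, which simply states that McCann's Lemma 7 carries over verbatim once one notes that Hopf--Rinow and the geodesic ($C^{2,1}$) regularity of minimisers persist for $C^{1,1}$ metrics. You spell out the details that the paper delegates to McCann — the subgradient/supergradient matching via Proposition \ref{prop:superdifferentiable}(ii) and the lower-semicontinuity argument for the sufficiency direction — and these are precisely the ingredients the cited argument relies upon.
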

\begin{proof}
This follows exactly as in \cite[Lem.\ 7]{McC01}, only noting that the Hopf-Rinow theorem remains true in this regularity (indeed
even for $g\in C^0$, cf.\ \cite[2.5.28]{BBI01}) and the fact that minimisers are geodesics, hence $C^{2,1}$ by what was said above.
\end{proof}
Note that any $\psi$ as in the previous Lemma is Lipschitz continuous, hence differentiable a.e.\ by \cite[Lemma 2]{McC01}.
Based on the above results, the proofs of Theorems 8 and 9 in \cite{McC01} carry over verbatim to the current situation to give:
\begin{Theorem}\label{th:opt_maps} Let $M$ be a connected closed manifold with a $C^{1,1}$-Riemannian metric $g$ and let 
$\mu, \nu$ be Borel probability measures on $M$ with $\mu \ll \mathrm{vol_g}$. Also, let $c(x,y) = d(x,y)^2/2$. Then:
\begin{itemize}
\item[(i)] (Uniqueness) If $\psi: M\to \R$ satisfies $\psi=\psi^{cc}$ then $T: x\mapsto \exp_x(-\nabla\psi(x))$ minimises \eqref{eq:transp_cost}
among all Borel maps $S$ with $S_\#\mu = T_\# \mu$. Any other such map must equal $T$ $\mu$-almost everywhere.
\item[(ii)] (Existence) There exists a potential $\psi: M\to \R$ with $\psi=\psi^{cc}$ such that $T: x\mapsto \exp_x(-\nabla\psi(x))$ satisfies
$T_\# \mu = \nu$. Any other potential that pushes $\mu$ forward to $\nu$ gives rise to the same map $T$, up to a set of $\mu$-measure $0$.
\end{itemize}
\end{Theorem}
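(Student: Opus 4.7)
My plan is to port McCann's argument from \cite{McC01} essentially directly, since the two analytic ingredients the original proof hinges on are now available in the $C^{1,1}$ setting: the differentiability and supergradient description of the squared distance $\phi(\cdot) = d^2(\cdot,y)/2$ (Proposition \ref{prop:superdifferentiable}), and the pointwise characterization of the equality case in the $c$-concavity inequality (Lemma \ref{lem:subdiff}). The strategy in both parts rests on the observation that any $c$-concave $\psi = \psi^{cc}$ on the compact manifold $M$ inherits boundedness and Lipschitz regularity from $c$, so Rademacher's theorem makes $\psi$ differentiable $\mathrm{vol}_g$-a.e., hence $\mu$-a.e.\ by the absolute continuity assumption. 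In particular $T(x) := \exp_x(-\nabla\psi(x))$ is defined $\mu$-a.e.\ and serves as the unique pointwise competitor the equality clause of Lemma \ref{lem:subdiff} will select.

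For part (i), I would integrate the pointwise bound $c(x,S(x)) \geq \psi(x) + \psi^c(S(x))$ from Lemma \ref{lem:subdiff} against $\mu$, for any Borel $S$ with $S_\#\mu = T_\#\mu =: \nu'$. The pushforward identity turns $\int \psi^c\circ S\, d\mu$ into $\int \psi^c\, d\nu'$, which is independent of the choice of $S$, and Lemma \ref{lem:subdiff} guarantees equality at $\mu$-a.e.\ point when $S = T$, so $\mathcal{C}(S) \geq \mathcal{C}(T)$. If $\mathcal{C}(S) = \mathcal{C}(T)$, equality propagates to the pointwise inequality on a set of full $\mu$-measure, and the uniqueness clause of Lemma \ref{lem:subdiff} forces $S(x) = \exp_x(-\nabla\psi(x)) = T(x)$ there.

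For part (ii), I would produce $\psi$ by solving the dual Kantorovich problem \eqref{eq:Kantorovich}. The standard double-$c$-convexification shows the supremum is attained on pairs of the form $(\psi, \psi^c)$ with $\psi = \psi^{cc}$, and such pairs are equi-Lipschitz on $M$ (with constant inherited from $c$), so Arzela--Ascoli yields a maximizer. Kantorovich duality then concentrates any optimal coupling $\pi \in \Pi(\mu,\nu)$ on the zero set of $c(x,y) - \psi(x) - \psi^c(y)$, which by Lemma \ref{lem:subdiff} meets each fibre $\{x\}\times M$ (for $x$ in a set of full $\mu$-measure) in the single point $T(x)$. Therefore $\pi = (\mathrm{id}\times T)_\#\mu$ and $T_\#\mu = \nu$; the supplementary uniqueness assertion follows from (i) applied to any other potential that transports $\mu$ to $\nu$.

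The main obstacle, and the reason the $C^{1,1}$ case requires the preceding preparation, is to certify that nothing in McCann's steps uses more than $C^1$ differentiability of $\exp_x$ at a generic point or than $C^{1,1}$ regularity of $D_y^2$. The delicate point is the gradient formula $\nabla\phi(x) = \dot\sigma(1)$ used implicitly throughout, together with the fact that the set where $\exp_x$ fails to be differentiable has Lebesgue measure zero in $T_xM$ and is therefore invisible to $\mu$. Both are supplied by \eqref{eq:D_tangent} and the Rademacher-type argument behind Proposition \ref{prop:superdifferentiable}, so once those are in place the remainder of the argument genuinely carries over unchanged.
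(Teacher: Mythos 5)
Your proposal is correct and follows the same route as the paper, which simply asserts that the proofs of Theorems 8 and 9 of McCann's paper carry over verbatim once Proposition \ref{prop:superdifferentiable} and Lemma \ref{lem:subdiff} are established; you have essentially reconstructed what those two proofs of McCann say and checked that nothing beyond the $C^{1,1}$-level ingredients is invoked. One small inaccuracy in the final paragraph: the relevant null set is the set in $M$ where the Lipschitz potential $\psi$ fails to be differentiable (handled by Rademacher and $\mu\ll\vol_g$), not a null set in $T_xM$ where $\exp_x$ fails to be differentiable -- $T(x)=\exp_x(-\nabla\psi(x))$ only needs $\exp_x$ to be defined and continuous, not differentiable, at the single vector $-\nabla\psi(x)$.
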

By Kantorovich duality (\cite[Th.\ 5.10]{Vil09}) it follows that for $\psi$ and $T$ as in Theorem \ref{th:opt_maps} we have
\begin{align*}
J(\psi,\psi^c) &= \int_M \psi(x)\, d\mu(x) + \int_M \psi^c(y)\,d\nu(y) = \sup_{(u,v)\in \mathrm{Lip}_c} J(u,v) \\
&= \min_{S\in \mathcal{S}(\mu,\nu)} \int_M c(x,S(x))\,d\mu(x) = \int_M c(x,T(x))\,d\mu(x)
= \min_{\pi\in \Pi(\mu,\nu)} \int_{M\times M} c(x,y)\,d\pi(x,y),
\end{align*}
where $\Pi(\mu,\nu)$ denotes the set of all couplings between $\mu$ and $\nu$.

We note that also invertibility of $T$ (\cite[Cor.\ 10]{McC01}) and polar factorization of maps (\cite[Th.\ 11]{McC01}) carry
over unchanged to $C^{1,1}$-metrics.
\section{Distributional curvature quantities}\label{sec:dist_curv_bounds}
In order to lay out the distributional approach to curvature bounds for metrics of low regularity within a consistent framework, we are going to employ
the theory of distributional connections due to LeFloch and Mardare (\cite{LeFMar07}, cf.\ also \cite{Mar68,GKOS,S08}), which we briefly summarize below.

For $k\in \N_0\cup \{\infty\}$ 
let $\mathrm{Vol}(M)$ denote the volume bundle over $M$, and $\Gamma^k_c(M,\mathrm{Vol}(M))$ the 
space of compactly supported $C^k$ one-densities on $M$ (i.e., sections of $\mathrm{Vol}(M)$) that are $k$ times continuously differentiable. Then the space
of distributions of order $k$ on $M$ is the topological dual of $\Gamma^k_c(M,\mathrm{Vol}(M))$ (cf.\ \cite[Sec.\ 3.1]{GKOS}):
\[
\Dpk(M) := \Gamma^k_c(M,\mathrm{Vol}(M))'.
\]
For $k=\infty$ we will omit superscript $(k)$. There are topological embeddings $\Dpk(M) \hookrightarrow {\D'}{}^{(k+1)}(M)
\hookrightarrow \D'(M)$ for all $k$. 
The space of distributional $(r,s)$-tensor fields of order $k$ is defined as
\[
\Dpk\mathcal{T}^r_s(M) \equiv \Dpk(M,T^r_s M) := \Gamma^k_c(M,T^s_r(M) \otimes \mathrm{Vol}(M))'. 
\]
Furthermore  (cf.\ \cite[3.1.15]{GKOS}), denoting by $\X(M)$ the space of smooth vector fields on $M$ and by
$\Om^1(M)$ that of smooth one-forms,
\begin{equation}\label{eq:Cinf-ext}
\D'\mathcal{T}^r_s(M) \cong \D'(M) \otimes_{C^\infty(M)} \mathcal{T}^r_s(M) \cong L_{C^\infty(M)}(\Omega^1(M)^r\times \X(M)^s; \D'(M)).
\end{equation}
and in fact these isomorphisms hold in the bornological sense (\cite{N13}).
Analogous isomorphisms hold in finite differentiability classes: For the $C^k(M)$-module $\Gamma_{C^k}(M,F)$
($1\le k \le \infty$) we have:
\begin{equation}\label{eq:Ck-ext}
\begin{split}
\Dpk\mathcal{T}^r_s(M) &\cong \Dpk(M) \otimes_{C^k(M)} (\mathcal{T}^r_s)_{C^k}(M)\\ 
&\cong L_{C^k(M)}(\Omega^1_{C^k}(M)^r\times \X_{C^k}(M)^s; \Dpk(M)).
\end{split}
\end{equation}

The space of smooth tensor fields is continuously and densely embedded via
\begin{align*}
\mathcal{T}^r_s(M) &\hookrightarrow   \Dpk\mathcal{T}^r_s(M) \\
t &\mapsto [(\theta_1,\dots,\theta_r,X_1,\dots,X_s) \mapsto [\omega \mapsto\int_M t(\theta_1,\dots,\theta_r,X_1,\dots,X_s)\omega]],
\end{align*}
where $\omega$ is a one-density. We note that the dense embedding of
$\mathcal{T}^r_s(M)$ in $\Dpk\mathcal{T}^r_s(M)$ already fixes the form of all the operations on distributional tensor fields
to be introduced below since we want to have compatibility with smooth Riemannian geometry.

Any $t\in \trsm$ possesses a unique extension to a map that acts on distributions in one of its arguments: 
if $\tilde\theta_1 \in \D'\mathcal{T}^0_1(M)$,
then since $t(\,.\,,\theta_2,\dots,X_s)\in \X(M)$ we may set
\begin{equation}\label{eq:dist_ext}
t(\tilde \theta_1,\theta_2,\dots,X_s) := \tilde\theta_1(t(\,.\,,\theta_2,\dots,X_s)) \in \D'(M),
\end{equation}
and analogously for the other arguments.

\begin{Definition}\label{def:dist_conn} 
A \emph{distributional connection} is a map 
\[
\nabla: \X(M)\times \X(M) \to \D'\mathcal{T}^1_0(M)
\]
such that for $X,X',Y,Y'\in \X(M)$ and $f\in C^\infty(M)$
the usual computational rules hold: $\nabla_{f X+X'}Y = f\nabla_XY + \nabla_{X'}Y$, $\nabla_X(Y+Y') = \nabla_X Y +\nabla_X Y'$, $\nabla_X(f Y) = X(f)Y + f\nabla_X Y$.
It is called an $\ltl$-connection, $\nabla\in \ltl$, if $\nabla_X Y$ is an $\ltl$-vector field for any $X,Y\in \X(M)$ (cf.\ \cite[Sec.\ 3]{LeFMar07}).
\end{Definition}
More generally, denoting by ${\mathcal G}$ any of the spaces $C^k$ $(0\leq k)$ or $\lpl$ $(1\leq p)$, we call a distributional connection 
a \emph{${\mathcal G}$-connection} if $\nabla_X Y$ is a ${\mathcal G}$-vector field for any $X,Y\in \X(M)$.  $\ltl$-connections 
play a distinguished role in this hierarchy since they form the largest class for which there is a stable definition of the curvature tensor in distributions, cf.\ \cite{GT87,LeFMar07,S08}.

Any $\ltl$-connection can be extended to a map $\nabla: \X(M) \times \X_{\ltl}(M) \to \D' \mathcal{T}^1_0(M)$ by setting
\begin{equation}\label{eq:L2loc_ext}
(\nabla_X Y)(\theta) := X(\theta(Y)) - Y(\nabla_X \theta) \qquad (X\in \X(M),\ Y \in \X_{\ltl}(M), \ \theta\in \Omega^1(M)).
\end{equation}
Based on this extension, we can assign a Riemann tensor to each $\ltl$-connection as follows (\cite[Def.\ 3.3]{LeFMar07}):
\begin{Definition}\label{def:dist_riem} The distributional Riemann tensor of an $\ltl$-connection $\nabla$ is the map $R: \X(M)^3 \to \D'\mathcal{T}^1_0(M)$,
\[
R(X,Y,Z)(\theta) := (\nabla_X\nabla_Y Z - \nabla_Y\nabla_X Z- \nabla_{[X,Y]} Z)(\theta) 
\]
for $X,Y,Z\in \X(M)$ and $\theta\in \Om^1(M)$,
\end{Definition}
If $F_i$ is a smooth local frame in $\X(U)$ and $F^j\in \Om^1(U)$ its dual frame (i.e., $F^j(F_i) = \delta_{i}^j$), then the Ricci tensor corresponding to $\nabla$ 
is given by (using the Einstein summation convention)
\begin{equation}\label{eq:Ric_def}
\Ric(X,Y) := (R(X,F_i)Y)(F^i) \in \D'(U) \qquad (X,Y\in \X(U)),
\end{equation}
and is readily seen not to depend on the choice of local frame.

A distributional Riemannian metric on $M$ (\cite{Mar68,GKOS,LeFMar07}) is an element of $\D'\mathcal{T}^0_2(M)$ that is symmetric and non-degenerate in the sense
that $g(X,Y)=0$ for all $Y\in \X(M)$ implies $X=0$. In particular, any $C^1$-Riemannian metric is an example of a distributional metric in this sense.
With a view to defining a metric connection in this generality, recall first the Koszul formula which uniquely determines the Levi-Civita connection 
of a smooth metric $g$ on $M$ (cf.\ \cite[Ch.\ 3]{ON83}):
\begin{equation}\label{eq:Koszul}
\begin{split}
2g(\nabla_X Y,Z) &= X(g(Y,Z)) + Y(g(Z,X)) -Z(g(X,Y)) \\
&- g(X,[Y,Z]) + g(Y,[Z,X]) + g(Z,[X,Y]) =: F(X,Y,Z)
\end{split}
\end{equation}
For a distributional metric $g$, we may use the right-hand side of \eqref{eq:Koszul} to define a bilinear map 
$\X(M)\times \X(M) \to \D'\mathcal{T}^0_1(M)$,
\begin{equation}\label{eq:nabla_flat}
\nabla^\flat_X Y := Z \mapsto \frac{1}{2} F(X,Y,Z) \in \D'\mathcal{T}^0_1(M),
\end{equation}
called the \emph{distributional Levi-Civita connection} of $g$ (\cite[Def.\ 4.2]{LeFMar07}). Note, however, that this is not (yet) a distributional connection
in the sense of Definition \ref{def:dist_conn} since it is of order $(0,1)$ instead of $(1,0)$. 
In addition to the standard product rules it also satisfies (the analogues of properties (D4), (D5) in \cite[Th.\ 3.11]{ON83})
\begin{equation}\label{eq:D4D5}
\begin{split}
\nabla^\flat_X Y - \nabla^\flat_Y X &= [X,Y]^\flat,\ \text{i.e.,} \ (\nabla^\flat_X Y - \nabla^\flat_Y X)(Z) = g([X,Y],Z)\\
X(g(Y,Z)) &= (\nabla^\flat_X Y)(Z) + (\nabla^\flat_X Z)(Y)
\end{split}
\end{equation}
for all $X, Y, Z\in \X(M)$. 

In order to obtain an $\ltl$-connection from $\nabla^\flat$ (which then will allow us to define the curvature tensors via Definition \ref{def:dist_riem}) 
we want to raise the index via $g$, i.e.,
\begin{equation}\label{eq:raise_index}
  g(\nabla_X Y,Z) := (\nabla^\flat_X Y)(Z) \qquad (X, Y, Z\in \X(M)).
\end{equation}
To be able to do this we need to restrict to metrics of higher regularity. It turns out that the \emph{Geroch-Traschen} class of metrics
is a reasonable family of metrics where this strategy can be implemented. This class consists of metrics in $H^1_{\mathrm{loc}}(M) \cap L^\infty_{\mathrm{loc}}(M)$
that are uniformly non-degenerate in the sense that $|\det g(F_i,F_j)|$ is locally bounded away from zero for any local frame $F_i$, see
\cite[Prop.\ 4.4]{LeFMar07}. For such metrics, hence in particular for any $C^1$-Riemannian metric, \eqref{eq:raise_index} defines an $\ltl$-connection
in the sense of Definition \ref{def:dist_conn}, which therefore has well-defined distributional curvature tensors.

Let us analyse the case of a $C^1$-Riemannian metric in more detail, as it will be the most relevant setting in this paper. For such a $g$ and $X,Y\in \X(M)$,
$\nabla_X Y$ is in fact a continuous vector field, so $\nabla$ is a $C^0$-connection, implying that
$(R(X,Y)Z)(\theta) \in \D'{}^{(1)}(M)$. Consequently, $R\in \Dpo\mathcal{T}^1_3(M)$.

Given $W,X,Y,Z \in \X(M)$ we then define $R(W,X,Y,Z)\in \Dpo$ by
\begin{align*}
R(W,X,Y,Z) := &X(g(W,\nabla_Y Z)) - Y(g(W,\nabla_X Z))\\ 
&- g(\nabla_X W,\nabla_Y Z) + g(\nabla_Y W,\nabla_X Z) - g( W,\nabla_{[X,Y]} Z).
\end{align*}
Then using \eqref{eq:dist_ext}, \eqref{eq:L2loc_ext} and \eqref{eq:D4D5} it follows that (cf.\ \cite[Rem.\ 4.5]{LeFMar07})
\[
R(W,Z,X,Y) = g(W,R(X,Y)Z).
\]
This identity can also be verified using local coordinates since there is a well-defined multiplication 
of distributions of first order with $C^1$-functions.

The Ricci tensor of a $C^1$-Riemannian metric $g$ is given by \eqref{eq:Ric_def}, and will be denoted by $\Ric_g$ or $\Ric(g)$. Alternatively, it can be calculated in terms of $g$-orthonormal
frames (which are only $C^1$): Note that \eqref{eq:L2loc_ext} remains valid even when
$Y$ and $\theta$ are only $C^1$, so $(\nabla_X(\nabla_Y Z))(\theta) \in \Dpo(M)$ ($X\in \X(M)$). Given a smooth local frame
$F_i$ with dual frame $F^j$, by \eqref{eq:Ric_def} we have $\Ric(X,Y) = (R(X,F_i)Y)(F^i)$. If now $E_i\in \X_{C^1}(U)$ is a local $g$-orthonormal
frame, $g(E_i,E_j)=\delta_{ij}$ and we set $E^i:=g(E_i,\,.\,) \in \Om^1_{C^1}(U)$. Expressing $F^i, F_i$ as $C^1$-linear combinations 
of $E^i, E_i$ then by what was said above we may calculate as in the smooth case to arrive at
\[
\Ric(X,Y) = \sum_i g(E_i, R(E_i,X)Y),
\]
where the scalar product is now between the $C^1$-vector field $E_i$ and the $\Dpo$-vector field $R(E_i,X)Y$. 
Finally, again by the above observations, the standard local formulae hold in $\Dpo$:
\begin{equation}\label{eq:Riem_Ricc_distr_local}
\begin{split}
R^m_{ijk} &= \partial_j\Gamma^m_{ik} - \partial_k\Gamma^m_{ij} + \Gamma^m_{js}\Gamma^s_{ik} - \Gamma^m_{ks}\Gamma^s_{ij} \\
\Ric_{ij} &= R^m_{imj}.
\end{split}
\end{equation}
Turning now to Ricci curvature bounds, recall (\cite[Ch.\ I, \S 4]{Sch66}) that a distribution $T\in \D'(U)$ ($U\subseteq \R^n$ open) is
called non-negative, $T\ge 0$, if $T(\vphi)\equiv\langle T,\varphi\rangle \ge 0$ for each test function $\varphi\ge 0$. In the manifold context we therefore
call $T\in \D'(M)$ non-negative if $T(\omega) \equiv \lara{T}{\om}\ge 0$ for any compactly supported non-negative one-density $\omega$. Any such distribution
is in fact of order $0$, hence is a measure on $M$. For $S, T\in \D'(M)$ we say that $S\ge T$ if $S-T\ge 0$. Following \cite[Def.\ 3.3]{G20}
we say that a $C^1$-Riemannian metric $g$ satisfies $\Ric \ge K$ for some $K\in \R$ if
\begin{equation}\label{eq:dist_Ric_bound}
\Ric(X,X) \ge K g(X,X) \qquad \forall X\in \X(M),
\end{equation}
where the inequality is in $\Dpo(M) \subset \D'(M)$, as explained above. Upper bounds are defined analogously.

In our analysis of distributional curvature bounds a main tool will be regularisation via convolution. To accomodate the 
manifold setting, we employ a construction from \cite[3.2.10]{GKOS}, \cite[Sec.\ 2]{KSSV}, \cite[Sec.\ 3.3]{G20}). 
Let $\rho\in \D(B_1(0))$ (with $B_1(0)$ the unit ball in $\R^n$), $\int \rho = 1$, $\rho\ge 0$ and, for $\eps\in (0,1]$, set $\rho_{\eps}(x):=\eps^{-n}\rho\left (\frac{x}{\eps}\right)$.
Pick a countable and locally finite family of relatively compact chart neighbourhoods 
$(U_i,\psi_i)$ ($i\in \N$), as well as a subordinate partition of unity $(\zeta_i)_i$ with $\mathrm{supp}(\zeta_i)\Subset U_i$ for all $i$.
Moreover, choose a family of cut-off functions $\chi_i\in\mathscr{D}(U_i)$ with $\chi_i\equiv 1$ on a
neighbourhood of $\mathrm{supp}(\zeta_i)$.
Denote by $f_*$ (respectively $f^*$) the push-forward (resp.\ pull-back) of a distribution under a diffeomorphism $f$, and set, for any $T \in \D'\mathcal{T}^r_s(M)$,
\begin{equation}\label{eq:M-convolution}
T\star_M \rho_\eps(x):= \sum\limits_i\chi_i(x)\,\psi_i^*\Big(\big(\psi_{i\,*} (\zeta_i\cdot T)\big)*\rho_\eps\Big)(x),
\end{equation}
where the convolution of tensor fields on open subsets of $\R^n$ is to be understood component-wise.  
Due to the presence of the cut-off functions $\chi_i$, the map $(\eps,x) \mapsto \mathcal{T}\star_M \rho_\eps(x)$
is smooth on $(0,1] \times M$. Note that, for any compact set $K\comp M$, there exists an $\eps_K$ such that for all $\eps<\eps_K$ and
all $x\in K$ \eqref{eq:M-convolution} is in fact a finite sum with all $\chi_i\equiv 1$. More precisely, this is the case 
whenever $\eps_K$ is less than the distance between the support of $\zeta_i\circ\psi_i^{-1}$ and the boundary of $\psi_i(U_i)$
for all $i$ with $U_i\cap K\neq \emptyset$.

This ``manifold convolution'' has smoothing properties that are closely analogous to those of convolution with a mollifier on 
open subsets of $\R^n$. In particular, for $T\in \D'\mathcal{T}^r_s(M)$ we have
\begin{equation}\label{eq:star_M_convergence}
T\star_M \rho_\eps \to T \ \text{ in } \ \D'\mathcal{T}^r_s(M) \quad (\eps\to 0),
\end{equation}
and indeed this convergence is even in $C^k_{\mathrm{loc}}$ or $W^{k,p}_{\mathrm{loc}}$ if $T$ is contained in these spaces (\cite[Prop.\ 3.5]{G20}).
Note that since $\rho\ge 0$, $\star_M$ preserves non-negativity: 
\begin{equation}\label{eq:star_M_preserves_positivity}
T\in \D'(M), \ T\ge 0  \ \Rightarrow \ T\star_M \rho_\eps \ge 0 \ \text{ in }  \ C^\infty(M).
\end{equation}

If $M$ is compact and $g$ is a Riemannian metric of regularity at least $C^0$, then since $g_\eps:=g\star_M \rho_\eps \to g$ uniformly on $M$,  $g_\eps$
is a smooth Riemannian metric on $M$ for $\eps$ sufficiently small (which we shall always tacitly assume below).

\section{Distributional curvature bounds via regularisation}\label{sec:dist_bounds_via_reg}
Let $M$ be a compact manifold equipped with a distributional Riemannian metric $g$. As in Section \ref{sec:dist_curv_bounds} we fix a non-negative mollifier
$\rho\in \D(B_1(0))$ with $\int \rho = 1$ and set $g_\eps := g\star_M \rho_\eps$. To begin with, we analyse the relationship between distributional
Ricci bounds for $g$ and classical (pointwise) Ricci bounds for the approximating smooth Riemannian metrics $g_\eps$. The key to this analysis are certain
versions of Friedrichs' Lemma, which provide improved convergence properties of commutators between differentiation and convolution operators.
These turned out to be essential for generalising classical singularity theorems in Lorentzian geometry to metrics of lower regularity 
(\cite{hawkingc11,penrosec11,GGKS,G20,KOSS22}). The versions we shall rely on here are the following (see \cite[Lem.\ 4.8, 4.9]{G20}):
\begin{Lemma}\label{lem:Friedrichs}\ 
\begin{itemize}
\item[(i)] Let $a\in C^1(\R^n)$, $f\in C^0(\R^n)$. Then $(a*\rho_\eps)(f*\rho_\eps) - (af)*\rho_\eps \to 0$ in $C^1(K)$ 
for any compact set $K\subseteq \R^n$.
\item[(ii)] Let $a, a_\eps \in C^1(\R^n)$ such that $a_\eps \to a$ in $C^1$ and such that for each $K$ compact in $\R^n$ there 
exists some $c_K$ such that $\|a-a_\eps\|_{\infty,K} \le c_K\eps$. Then for any $f\in C^0(\R^n)$ we have
$a_\eps(f*\rho_\eps) - (af)*\rho_\eps \to 0$ in $C^1(K)$ for any compact set $K\sse \R^n$.
\end{itemize}
\end{Lemma}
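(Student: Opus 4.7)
For part (i), the $C^0$-convergence of $C_\eps := (a*\rho_\eps)(f*\rho_\eps) - (af)*\rho_\eps$ to $0$ on any compact $K$ is immediate since both factors of the first product converge uniformly to $af$. The substance of the claim is the $C^1$-convergence. Differentiating and using $(u*\rho_\eps)' = u*\rho_\eps'$, I would write
\begin{equation*}
  \partial_j C_\eps = (\partial_j a * \rho_\eps)(f*\rho_\eps) + \big[(a*\rho_\eps)(f*\partial_j\rho_\eps) - (af)*\partial_j\rho_\eps\big].
\end{equation*}
The first summand converges uniformly to $\partial_j a\cdot f$ on $K$; denoting the bracket by $A_\eps$, the task reduces to showing $A_\eps \to -\partial_j a \cdot f$ uniformly.

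Using $\int\rho_\eps = 1$ I would rewrite $A_\eps(x) = \iint [a(x-z) - a(x-y)]\, f(x-y)\, \rho_\eps(z)\partial_j\rho_\eps(y)\,\dd y\,\dd z$, Taylor-expand $a(x-y) = a(x) - \int_0^1\nabla a(x-sy)\cdot y\,\dd s$ (and similarly at $x-z$), and split $A_\eps = I_1 + I_2$ according to the two Taylor remainders. Rescaling $y = \eps w$ in $I_1$ and invoking dominated convergence (using uniform continuity of $\nabla a$ and $f$ on compacts), the piece $I_1$ limits to $\nabla a(x)\cdot f(x)\int w\,\partial_j\rho(w)\,\dd w = -\partial_j a(x)f(x)$ via the moment identity $\int w_k\partial_j\rho\,\dd w = -\delta_{jk}$ (integration by parts against $\int\rho=1$). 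The cross piece $I_2$ factors as a product: a bracket of size $O(\eps)$ (from the $z$-integral, after rescaling against $\rho_\eps(z)$) times $(f*\partial_j\rho_\eps)(x)$. The latter is naively only $O(\eps^{-1})$, but the identity $\int\partial_j\rho_\eps = 0$ lets me write $(f*\partial_j\rho_\eps)(x) = \int[f(x-y) - f(x)]\partial_j\rho_\eps(y)\,\dd y$, whence $|(f*\partial_j\rho_\eps)(x)| \le \om_f(\eps)\cdot \eps^{-1}\|\partial_j\rho\|_1 = o(\eps^{-1})$ by uniform continuity of $f$. Thus $I_2 = O(\eps)\cdot o(\eps^{-1}) = o(1)$, giving (i).

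For part (ii) I would reduce to (i) by adding and subtracting $a*\rho_\eps$: it suffices to show $(a_\eps - a*\rho_\eps)(f*\rho_\eps) \to 0$ in $C^1(K)$. The $C^0$-statement is immediate from the two uniform convergences. Differentiating produces
\begin{equation*}
(\partial_j a_\eps - \partial_j a*\rho_\eps)(f*\rho_\eps) + (a_\eps - a*\rho_\eps)(f*\partial_j\rho_\eps).
\end{equation*}
The first term vanishes uniformly because $\partial_j a_\eps \to \partial_j a$ (the $C^1$-hypothesis on $a_\eps$) and $\partial_j a * \rho_\eps \to \partial_j a$. For the second, the hypothesis $\|a-a_\eps\|_{\infty,K}\le c_K\eps$ combined with the elementary estimate $\|a - a*\rho_\eps\|_{\infty,K} = O(\eps)$ (valid for $a\in C^1$) yields $\|a_\eps - a*\rho_\eps\|_{\infty,K} = O(\eps)$; multiplied against the $o(\eps^{-1})$-bound on $f*\partial_j\rho_\eps$ from above, the product is $o(1)$ uniformly on $K$.

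The main obstacle is precisely the refined bound $\|f*\partial_j\rho_\eps\|_{\infty,K} = o(\eps^{-1})$ in place of the naive $O(\eps^{-1})$; this cancellation, produced by $\int\partial_j\rho_\eps = 0$ together with uniform continuity of $f$, is what makes both parts succeed, and it is exactly why the hypothesis $f\in C^0$ is required and why the explicit $O(\eps)$-rate in (ii) cannot be weakened.
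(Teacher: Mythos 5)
Your proof is correct, and since the paper itself only cites \cite[Lem.\ 4.8, 4.9]{G20} without reproducing a proof, it is worth spelling out where your argument stands. You have correctly identified the crucial cancellation: the naive bound $\|f*\partial_j\rho_\eps\|_{\infty,K}=O(\eps^{-1})$ must be sharpened to $o(\eps^{-1})$ via $\int\partial_j\rho_\eps=0$ and uniform continuity of $f$, and this is indeed what makes both parts work with only $f\in C^0$. The decomposition of the derivative, the identification $I_2=(a*\rho_\eps-a)(f*\partial_j\rho_\eps)$ with the $O(\eps)\cdot o(\eps^{-1})$ estimate, and the Taylor-plus-rescaling treatment of $I_1$ with the moment identity $\int w_k\partial_j\rho\,\dd w=-\delta_{jk}$ are all sound and produce uniform convergence on compacts. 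The reduction of (ii) to (i) by inserting $a*\rho_\eps$ is the natural move, and your use of the $O(\eps)$ rate hypothesis on $\|a-a_\eps\|_{\infty,K}$ together with the elementary $\|a-a*\rho_\eps\|_{\infty,K}=O(\eps)$ (valid since $a\in C^1$, hence locally Lipschitz) is exactly what is needed to beat the $o(\eps^{-1})$ growth of $f*\partial_j\rho_\eps$. This is the standard ``explicit commutator'' route to Friedrichs-type lemmas, and it matches the strategy underlying \cite[Lem.\ 4.8, 4.9]{G20}; you have not taken a shortcut that would silently fail, and you have correctly flagged why the stated rate in (ii) cannot be relaxed.
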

Based on this, we can collect the following commutator properties. Note that the convergence claimed in points (i)--(iii) of Proposition \ref{prop:starM_convolution_Ricci} is, in each case, one order of differentiation better than one would expect from the 
limiting properties of the individual terms. 
\begin{Proposition}\label{prop:starM_convolution_Ricci} 
Let $g$ be a $C^1$-Riemannian metric on $M$, $g_\eps:= g\star_M \rho_\eps$, and let $X, Y\in \X(M)$. Then
\begin{itemize}
\item[(i)] 
$\Ric(g\star_M \rho_\eps) - \Ric(g)\star_M \rho_\eps \to 0$ in $C^0(M,T^0_2M)$ as $\eps\to 0$.
\item[(ii)] 
$\Ric_g(X,Y)\star_M \rho_\eps - \Ric_{g_\eps}(X,Y) \to 0$ in $C^0(M)$ as $\eps\to 0$.
\item[(iii)] 
$g(X,Y)\star_M \rho_\eps - g_\eps(X,Y) \to 0$ in $C^2(M)$ as $\eps\to 0$.
\end{itemize}
\end{Proposition}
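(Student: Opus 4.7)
The plan is to reduce each statement to a local calculation in $\R^n$, using that on any compact set $K\Subset M$, for $\eps$ small enough the manifold convolution $\star_M\rho_\eps$ becomes a finite sum of chart-wise ordinary convolutions weighted by smooth partition-of-unity cutoffs, which in turn commute with ordinary convolution up to Friedrichs-type commutators. I treat the parts in the order (iii), (i), (ii); in each case the conclusion is one derivative sharper than the individual summands would suggest, and the gain comes either from Lemma \ref{lem:Friedrichs} or, for the crucial $C^2$ bound, from a direct Taylor cancellation.

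For (iii), setting $h:=X^iY^j\in C^\infty$ in a chart, the local difference is
\begin{equation*}
(g_{ij}h)*\rho_\eps(x)-(g_{ij}*\rho_\eps)(x)h(x)=\int\bigl(h(y)-h(x)\bigr)g_{ij}(y)\rho_\eps(x-y)\,dy,
\end{equation*}
which is $O(\eps)$ in $C^0$. The first derivative in $x$, after integration by parts in $y$ and rescaling $y=x-\eps w$, produces a leading term $-\partial_k h(x)g_{ij}(x)\int w^k\partial_l\rho(w)\,dw=\partial_l h(x)g_{ij}(x)$ that cancels the $-\partial_l h(x)(g_{ij}*\rho_\eps)(x)$ coming from differentiating the $h(x)$ factor, giving $C^1$-convergence. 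A second derivative formally requires $\partial^2 g_{ij}$ (which exists only distributionally), but Taylor-expanding $h(x-\eps w)$ to second order (legal since $h\in C^\infty$) and using $\int w^k\partial_m\partial_l\rho\,dw=0$ together with $\int w^kw^p\partial_m\partial_l\rho\,dw=\delta^k_m\delta^p_l+\delta^p_m\delta^k_l$ kills the apparent $\eps^{-1}$-divergence and leaves a second cancellation of the remaining $O(1)$ terms, yielding $C^2$-convergence with uniform remainder bounds by compactness of $M$.

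For (i), the key intermediate step is $\Gamma(g_\eps)-\Gamma(g)\star_M\rho_\eps\to 0$ in $C^1$. Locally, with $\Phi_{kl}:=\partial_ig_{jl}+\partial_jg_{il}-\partial_lg_{ij}\in C^0$ and using $(\partial g)*\rho_\eps=\partial(g*\rho_\eps)$, the difference becomes $\tfrac12\bigl[g_\eps^{kl}(\Phi_{kl}*\rho_\eps)-(g^{kl}\Phi_{kl})*\rho_\eps\bigr]$. Since $\|g_\eps-g\|_\infty=O(\eps)$ (because $g\in C^1$), continuity of matrix inversion gives $\|g_\eps^{-1}-g^{-1}\|_\infty=O(\eps)$ with both inverses in $C^1$, so Lemma \ref{lem:Friedrichs}(ii) applies with $a_\eps=g_\eps^{kl}$, $a=g^{kl}$, $f=\Phi_{kl}$. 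Using the schematic $\Ric=\partial\Gamma+\Gamma\Gamma$ from \eqref{eq:Riem_Ricc_distr_local}, the $\partial\Gamma$-part of $\Ric(g_\eps)-\Ric(g)\star_M\rho_\eps$ is the derivative of the $C^1$-small Christoffel difference, hence $C^0$-small; for the $\Gamma\Gamma$-part the decomposition $\Gamma(g_\eps)^2-(\Gamma(g)^2)\star_M\rho_\eps=(\Gamma(g_\eps)-\Gamma(g)\star_M\rho_\eps)(\Gamma(g_\eps)+\Gamma(g)\star_M\rho_\eps)+[(\Gamma(g)\star_M\rho_\eps)^2-(\Gamma(g)^2)\star_M\rho_\eps]$ handles the ``Friedrichs-gain'' piece and the standard convolution-of-a-continuous-function piece separately, each vanishing in $C^0$. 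For (ii), one splits
\begin{equation*}
\Ric_g(X,Y)\star_M\rho_\eps-\Ric_{g_\eps}(X,Y)=\bigl[\Ric_g(X,Y)\star_M\rho_\eps-(\Ric(g)\star_M\rho_\eps)(X,Y)\bigr]+(\Ric(g)\star_M\rho_\eps-\Ric(g_\eps))(X,Y);
\end{equation*}
the second term is $C^0$-small by (i), and the first is the commutator of convolution with contraction by smooth $X,Y$ applied to the order-$1$ distribution $\Ric(g)$. Decomposing locally $\Ric(g)_{ij}=\partial_k T_0^k+T_1$ with $T_0^k,T_1\in C^0$ (via \eqref{eq:Riem_Ricc_distr_local}) and running the (iii)-style argument one derivative lower — the $T_1$-part is $(hT_1)*\rho_\eps-h(T_1*\rho_\eps)\to 0$ in $C^1$, and the $\partial_k T_0^k$-part reduces by a further integration by parts to a commutator of the same type — delivers the required $C^0$-convergence.

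The principal obstacle is the $C^2$-claim in (iii): a direct application of Friedrichs' Lemma yields only $C^1$, so the extra derivative must be extracted by the explicit symbolic cancellation described above, which requires both $h=X^iY^j\in C^\infty$ and the fact that $g\in C^1$ is a genuine function (so that the Taylor expansion of $g$ to first order with continuous remainder is available) rather than merely a distribution. The partition-of-unity bookkeeping needed to globalise the local estimates is routine once the local cancellations are secured.
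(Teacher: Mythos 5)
Your argument is correct and reaches the conclusion, but for part (iii) it takes a genuinely different route than the paper. The paper first differentiates once to reduce the $C^2$-claim to the $C^1$-statement $(\partial g\cdot h)*\rho_\eps - (\partial g*\rho_\eps)\cdot h \to 0$, and then splits off $(f*\rho_\eps)(h*\rho_\eps)$ as an intermediate term so that Lemma~\ref{lem:Friedrichs}(ii) handles one half and the auxiliary Lemma~4.7 of \cite{G20} the other. You instead work directly at the level of $g$ and extract the extra derivative by an explicit Taylor/moment computation: expanding $h(x-\eps w)$ to second order, $g(x-\eps w)$ to first order, and invoking the vanishing of $\int w^k\,\partial_m\partial_l\rho$ together with $\int w^kw^p\,\partial_m\partial_l\rho = \delta^k_m\delta^p_l+\delta^p_m\delta^k_l$ to cancel the apparent $\eps^{-1}$-divergence. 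This is more self-contained (it re-derives rather than cites the Friedrichs-type gain and avoids \cite[Lem.~4.7]{G20}), at the price of carrying out the moment bookkeeping by hand and being slightly more delicate on the uniformity of the remainders (which is fine here since $M$ is compact and $\partial g$ has a uniform modulus of continuity). Parts (i) and (ii) are essentially the paper's argument: your reduction $\Gamma(g_\eps)-\Gamma(g)\star_M\rho_\eps\to 0$ in $C^1$ via Lemma~\ref{lem:Friedrichs}(ii) with $a_\eps=g_\eps^{kl}$ is exactly the estimate from \cite[Lem.~4.6]{G20} that the paper cites, and your decomposition $\Ric=\partial_kT_0^k+T_1$ for (ii) is just a more careful spelling-out of the paper's schematic $R=\partial f$. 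One small simplification: for the $\Gamma\Gamma$-part in (i), the ``Friedrichs-gain'' decomposition is superfluous — since $\Gamma(g_\eps)\to\Gamma(g)$ and $(\Gamma(g)\Gamma(g))\star_M\rho_\eps\to\Gamma(g)\Gamma(g)$, both uniformly, the $\Gamma\Gamma$-commutator converges in $C^0$ for elementary reasons, without any commutator gain.
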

\begin{proof}
(i) This is shown in (the proof of) \cite[Lem.\ 4.6]{G20}, so we only outline the argument briefly.
Let $(\vphi, U)$ be any local chart on $M$. Then writing out the local expressions for 
$\Ric(g\star_M \rho_\eps)$ and $\Ric(g)\star_M \rho_\eps$, it
follows that it suffices to show that, setting 
$h:= \vphi_*g$, $h_\eps:= \vphi_*g_\eps$:
\[
h_\eps^{ij} ((\zeta \partial_kh_{lm}) *\rho_\eps) - (h^{ij} \zeta \partial_k h_{lm} )*\rho_\eps \to 0 \qquad (\eps\to 0)
\]
in $C^1$ for any compactly supported smooth function $\zeta$ and all $i,j,k,l,m$. 
This in turn follows from Lemma \ref{lem:Friedrichs} (ii), together with 
\cite[(4.4)]{G20}, which shows that $g_\eps^{-1}$ converges to $g^{-1}$ at least at a linear rate in $\eps$. 

(ii) Writing $R_\eps$ for a component function of a chart representation of $\Ric(g\star_M \rho_\eps)$ and $R$
for the corresponding one of $\Ric(g)$, the claim reduces to showing that, for any smooth function $h$ we
have that $(R\cdot h)*\rho_\eps - R_\eps\cdot h   \to 0$ in $C^0$. Due to (i), this in turn will follow if we can 
show that 
\begin{equation}\label{eq:RicXX_local}
(R\cdot h)*\rho_\eps - (R*\rho_\eps)\cdot h \to 0
\end{equation}
in $C^0$. Here, $R$ is a distribution of order one,
hence can locally be written as a derivative of a continuous function, which we schematically write as $R=\partial f$,
with $f$ continuous. By Lemma \ref{lem:Friedrichs} (ii), $(f*\rho_\eps)\cdot h - (fh)*\rho_\eps \to 0$ in $C^1$, so
\[
(\partial f*\rho_\eps)\cdot h + (f*\rho_\eps)\partial h - (\partial f \cdot h)*\rho_\eps - (f\partial h)*\rho_\eps \to 0
\]
in $C^0$. Here, $(f*\rho_\eps)\partial h - (f\partial h)*\rho_\eps \to 0$ in $C^0$, and the remaining terms give the claim
\eqref{eq:RicXX_local}.

(iii) Via chart representations, and schematically writing $g$ also for the local components of the metric $g$, as well
as $\partial g$ for any first order derivative,
the claim reduces to showing that, for $h\in \cinfty$, we have $(\partial g\cdot h)*\rho_\eps - (\partial g*\rho_\eps)\cdot h \to 0$ in $C^1$.
Setting $f:=\partial g\in C^0$,
\[
(f\cdot h)*\rho_\eps - (f*\rho_\eps)\cdot h = (f\cdot h)*\rho_\eps - (f*\rho_\eps)(h*\rho_\eps) + (f*\rho_\eps)(h*\rho_\eps) - (f*\rho_\eps)\cdot h.
\]
By \cite[(3.7)]{G20}, $\|h-h*\rho_\eps\|_{\infty,K} \le c_K\eps$ for any compact set $K$, and the same is true when replacing $h$ by $\partial h$ here. 
From this, convergence of the difference of the first two terms to $0$ in $C^1$ is immediate from Lemma \ref{lem:Friedrichs} (ii).
Finally, convergence to $0$ in $C^1$ of the remaining difference follows by taking into account \cite[Lem.\ 4.7]{G20}.
\end{proof}
\begin{Theorem}\label{th:Ricci_bounds_approx}
Let $K\in \R$. Then for any $C^1$-Riemannian metric $g$ on a compact manifold $M$, the following are equivalent:
\begin{itemize}
\item[(i)] $\Ric_g \ge K$ (resp.\ $\Ric_g \le K$) in the sense of distributions (see \eqref{eq:dist_Ric_bound}).
\item[(ii)] For each $\delta>0$ there exists some $\eps_0>0$ such that $\Ric_{g_\eps} \ge K-\delta$ 
(resp.\ $\Ric_{g_\eps} \le K+\delta$) for all $\eps\in (0,\eps_0)$.
\end{itemize}
\end{Theorem}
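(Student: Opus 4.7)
The plan is to prove (i) $\Leftrightarrow$ (ii) by comparing the smooth tensors $\Ric_{g_\eps}$ and $\Ric_g \star_M \rho_\eps$ via Proposition \ref{prop:starM_convolution_Ricci}, and by exploiting that the mollification $\star_M \rho_\eps$ preserves scalar distributional non-negativity (equation \eqref{eq:star_M_preserves_positivity}).

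For (i) $\Rightarrow$ (ii), the key preliminary step is to show that the smooth symmetric $(0,2)$-tensor
\begin{equation*}
S_\eps := (\Ric_g - K g) \star_M \rho_\eps = \Ric_g \star_M \rho_\eps - K g_\eps
\end{equation*}
is \emph{pointwise} non-negative. Working chart by chart in \eqref{eq:M-convolution}, fix $p \in M$ and $v \in T_p M$, and extend $v$ to a smooth vector field $V \in \X(M)$ whose components are constant in some chart around $p$. Then $(\Ric_g - K g)(V,V) \ge 0$ in $\Dpo(M)$ by the hypothesis, so $(\Ric_g - K g)(V,V) \star_M \rho_\eps \ge 0$ in $C^\infty(M)$ by \eqref{eq:star_M_preserves_positivity}; the constancy of $V$ in a chart containing $p$, together with the non-negativity of the cut-offs $\zeta_i, \chi_i$ and the fact that push-forward and pull-back preserve pointwise positive semi-definiteness of $(0,2)$-tensors, implies that at $p$ this scalar coincides with the tensorial evaluation $S_\eps(v,v)(p)$. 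Since $p, v$ were arbitrary, $S_\eps \ge 0$ pointwise. Now decompose
\begin{equation*}
\Ric_{g_\eps} - (K-\delta) g_\eps = \big(\Ric_{g_\eps} - \Ric_g \star_M \rho_\eps\big) + S_\eps + \delta g_\eps.
\end{equation*}
By Proposition \ref{prop:starM_convolution_Ricci}(i), the first term tends to $0$ in $C^0(M,T^0_2 M)$, $S_\eps$ is $\ge 0$, and $\delta g_\eps \ge (\delta/2)\, g$ uniformly on $M$ for $\eps$ small (using $g_\eps \to g$ uniformly and compactness of $M$), which dominates the operator-norm error of the first term for $\eps$ small enough. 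The upper-bound case is symmetric.

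For (ii) $\Rightarrow$ (i), fix $X \in \X(M)$ and a non-negative one-density $\om$. Integrating the pointwise hypothesis against $\om$ yields
\begin{equation*}
\int_M \big(\Ric_{g_\eps}(X,X) - (K-\delta) g_\eps(X,X)\big)\,\om \ge 0 \qquad (\eps < \eps_0).
\end{equation*}
Proposition \ref{prop:starM_convolution_Ricci}(ii) combined with \eqref{eq:star_M_convergence} applied to $\Ric_g(X,X) \in \Dpo(M)$ gives $\int_M \Ric_{g_\eps}(X,X)\,\om \to \lara{\Ric_g(X,X)}{\om}$; Proposition \ref{prop:starM_convolution_Ricci}(iii) together with uniform convergence $g(X,X)\star_M \rho_\eps \to g(X,X)$ (valid since $g(X,X) \in C^1(M)$) yields $g_\eps(X,X) \to g(X,X)$ uniformly. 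Passing to $\eps \to 0$ and then $\delta \to 0$ delivers $\lara{\Ric_g(X,X)}{\om} \ge K\,\lara{g(X,X)}{\om}$ for every such $X$ and $\om$, which is \eqref{eq:dist_Ric_bound}.

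The main obstacle is the tensorial upgrade in the forward direction: the hypothesis is a family of scalar distributional inequalities indexed by smooth vector fields, while the conclusion asks for \emph{pointwise tensorial} non-negativity of the smoothed tensor $S_\eps$. The point is that $\star_M \rho_\eps$ does not commute with pairing against a general (non-constant) vector field, and the commutator estimates of Proposition \ref{prop:starM_convolution_Ricci} are precisely designed to control the resulting discrepancy between $\Ric_{g_\eps}$ and $\Ric_g \star_M \rho_\eps$; the constant-vector argument in a chart circumvents this issue at the level of $S_\eps$ itself.
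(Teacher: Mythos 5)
Your direction (ii)$\Rightarrow$(i) matches the paper's argument closely: fix $X\in\X(M)$, integrate the pointwise inequality against a non-negative density $\om$, and pass to the limit using Proposition~\ref{prop:starM_convolution_Ricci}(ii),(iii) together with \eqref{eq:star_M_convergence}. The interesting divergence is in (i)$\Rightarrow$(ii). The paper works scalarly: it fixes a global $g$-orthonormal frame $X_1,\dots,X_n$, applies \eqref{eq:star_M_preserves_positivity} to the scalar $(\Ric_g-Kg)(V,V)$ for each constant-coefficient combination $V=\sum_i\lambda_iX_i$, controls the commutator errors uniformly over the $\lambda$-sphere via Proposition~\ref{prop:starM_convolution_Ricci}(ii),(iii), and recovers the pointwise tensor bound by varying $\lambda$. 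You instead aim to show pointwise non-negativity of the convolved \emph{tensor} $S_\eps=(\Ric_g-Kg)\star_M\rho_\eps$ and then exploit the decomposition $\Ric_{g_\eps}-(K-\delta)g_\eps=(\Ric_{g_\eps}-\Ric_g\star_M\rho_\eps)+S_\eps+\delta g_\eps$ together with Proposition~\ref{prop:starM_convolution_Ricci}(i); that closing step is fine, and the route is structurally cleaner provided $S_\eps\ge 0$ is actually established.

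That is where the gap is. You claim that for a $V$ with constant components in \emph{one} chart around $p$, $\bigl((\Ric_g-Kg)(V,V)\bigr)\star_M\rho_\eps(p)$ coincides with $S_\eps(v,v)(p)$. But $\star_M$ in \eqref{eq:M-convolution} sums over a \emph{fixed} atlas and partition of unity, and the value at $p$ receives contributions from every $i$ with $\zeta_i\not\equiv 0$ near $p$; in charts where $V$ is not coordinate-constant, contraction against $V$ does not commute with component-wise convolution, so the two sides differ term by term and the claimed identity fails. The fact $S_\eps\ge 0$ is nonetheless true, but one needs a \emph{different} coordinate-constant field per chart: for each contributing $i$ set $w_i:=(d\psi_i)_pv$ and $W_i:=\psi_i^*(\text{const.\ }w_i)\in\X(U_i)$, extended by a cut-off to $\X(M)$, so that $W_i(p)=v$. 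Then the $i$-th term of $S_\eps(v,v)(p)$ equals $\chi_i(p)\,\bigl[\psi_{i*}\bigl(\zeta_i\,(\Ric_g-Kg)(W_i,W_i)\bigr)\ast\rho_\eps\bigr](\psi_i(p))$, which is $\ge 0$ because \eqref{eq:dist_Ric_bound} applied to $W_i$ gives $(\Ric_g-Kg)(W_i,W_i)\ge 0$ and $\rho,\zeta_i,\chi_i\ge 0$; summing over $i$ yields $S_\eps(v,v)(p)\ge 0$. With this repair your argument is valid. The paper's frame-and-polarization route sidesteps the need for tensor-level non-negativity of $S_\eps$ altogether.
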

\begin{proof} It will suffice to prove the case of lower bounds.

(ii)$\Rightarrow$(i): Fix $X\in \X(M)$. Setting
$\delta:=\frac{1}{k}$, by (ii) and Proposition \ref{prop:starM_convolution_Ricci} (iii) we can pick $\eps_k \searrow 0$ such that 
(with $\Ric_{\eps}:=\Ric_{g_{\eps}}$) 
$\Ric_{\eps_k}(X,X) \ge \big(K-\frac{1}{k}\big)g(X,X)\star_M \rho_{\eps_k}$.  Then combining Proposition \ref{prop:starM_convolution_Ricci} (ii) with the fact
that $\Ric_g(X,X)\star_M \rho_\eps \to \Ric_g(X,X)$ in $\D'(M)$, it follows that $\int_M \Ric_{\eps_k}(X,X) \om \to \langle \Ric_g(X,X),\om\rangle$
for any $\om\in \Gamma_c(M,\Vol(M))$. If $\om\ge 0$, then 
\[
\int_M \Ric_{\eps_k}(X,X)\cdot \om \ge \Big(K - \frac{1}{k}\Big) \int_M (g(X,X)\star_M \rho_{\eps_k}) \cdot \omega.
\]
Letting $k\to \infty$ shows that $\lara{\Ric_{g}(X,X) - Kg(X,X)}{\omega} \ge 0$, as claimed.

(i)$\Rightarrow$(ii): We may without loss of generality assume that $M$ possesses a global frame
$X_1,\dots,X_n$ $\in \X(M)$ (otherwise cover $M$ by finitely many chart domains and argue separately). The Euclidean metric with respect to this frame defines a smooth Riemannian metric $h$ on $M$ and by compactness of $M$ there exist constants $c, C > 0$ with 
$cg(v,v) \le h(v,v) \le Cg(v,v)$ for any $v\in TM$.
Due to Proposition \ref{prop:starM_convolution_Ricci} (ii), given $\delta>0$ there exists some $\eps_1>0$ such that
for $\eps\in (0,\eps_1)$ we have
\begin{equation}\label{eq:Ricci_frame}
\Big|\sum_{i,j=1}^n \lambda_i \lambda_j (\Ric_{g_\eps}(X_i,X_j) - \Ric_g(X_i,X_j)\star_M \rho_\eps)|_x\Big| < \frac{\delta}{3C},
\end{equation}
for all $x\in M$ and all $(\lambda_1,\dots,\lambda_n)\in \R^n$ with $\sum_i \lambda_i^2 = 1$. Since the $\lambda_i$ are
independent of $x$, by definition of $\star_M$ (see \eqref{eq:M-convolution}) we may interchange $\sum \lambda_i$ and
$\star_M$ here. Thus, setting $V:=\sum_i \lambda_i X_i$, we have $h(V,V)=1$, and \eqref{eq:Ricci_frame} means that 
\begin{equation}\label{eq:Ricci_frame2}
|\Ric_{g_\eps}(V,V) - \Ric_g(V,V)\star_M \rho_\eps)|_x| < \frac{\delta}{3C},
\end{equation}
for any $x\in M$ and any $(\lambda_1,\dots,\lambda_n)\in \R^n$ as above. Analogously, it follows from Proposition \ref{prop:starM_convolution_Ricci} (iii) that there exists some $0<\eps_0\le \eps_1$ such that for any $\eps\in (0,\eps_0)$ and for any such choice of
vector field $V$ we have 
\[
|K||g(V,V)\star_M \rho_\eps-g_\eps(V,V)| < \frac{\delta}{3C}
\]
uniformly on $M$. By assumption, $\Ric_{g}(V,V) - Kg(V,V) \ge 0$ in $\D'(M)$, hence \eqref{eq:star_M_preserves_positivity} implies
that $\Ric_g(V,V)\star_M \rho_\eps - K g(V,V)\star_M \rho_\eps \ge 0$. Consequently, 
\begin{equation}\label{eq:Ricci_delta}
\Ric_{g_\eps}(V,V) - Kg_\eps(V,V) > -\frac{2\delta}{3C}  
= -\frac{2\delta}{3C} h(V,V) \ge -\frac{2\delta}{3} g(V,V).
\end{equation}
This same inequality then holds for each individual $h$-unit vector $V$ in any $T_xM$. Finally, since $g_\eps \to g$ uniformly on the $h$-unit tangent bundle,
by making $\eps_0$ smaller once more we may replace $(-2\delta/3) g(V,V)$ by $-\delta g_\eps(V,V)$ on the right hand side of \eqref{eq:Ricci_delta}, thereby
concluding the proof.
\end{proof}
\begin{remark}\label{rem:general_curvature_bounds}
The arguments used to prove Proposition \ref{prop:starM_convolution_Ricci} and Theorem \ref{th:Ricci_bounds_approx} in fact do not depend on the particular form of the Ricci tensor.
Analogous characterizations of distributional curvature bounds via regularisations therefore also hold for other curvature 
quantities, in particular for sectional curvature bounds.
\end{remark}
\section{From distributional to synthetic lower Ricci bounds}\label{sec:dist_to_synth}
In this section we show that if $M$ is a compact manifold with a $C^1$-Riemannian metric that has $K\in \R$ as a lower distributional Ricci
curvature bound, then the associated metric measure space satisfies the corresponding bound on its $\infty$-Ricci curvature
in the sense of \cite{LV09}.
Let us first recall the 
basic notions and definitions, following \cite{Vil09,LV09}. For any Polish space $(X,d)$ and probability measures $\mu, \nu\in P(X)$
denote by $W_2(\mu,\nu)$ the Wasserstein distance of order $2$ between $\mu$ and $\nu$ (cf.\ \cite[Def.\ 6.1]{Vil09}), i.e., 
\[
W_2(\mu, \nu) := \left( \inf_{\pi \in \Pi(\mu,\nu)}\int_X d(x,y)^2\, d\pi(x,y) \right)^{\frac{1}{2}}.
\]
The space of probability measures $\mu$ such that $\int_X d(x_0,x)^2\, d\mu(x) < \infty$ for some (hence any) $x_0\in X$, equipped with 
the metric $W_2$, is called the Wasserstein space of order $2$, and is denoted by $P_2(X)$. We will henceforth always assume that $X$
is compact. In the case we are mainly interested in, $X=M$ will be a compact Riemannian manifold. We then write $\pac(X)$ for the 
subspace of $P_2(X)$ of those measures that are absolutely continuous with respect to the Riemannian volume density $d\vol_g$.

Given a continuous convex function $U:[0,\infty) \to \R$ with $U(0)=0$ and $\nu\in P(X)$, define $U_\nu: P_2(X) \to \R\cup \{\infty\}$ by
\[
U_\nu(\mu) := \int_X U(\rho(x))\,d\nu(x) + U'(\infty)\mu_s(X).
\]
Here, $\mu=\rho\nu + \mu_s$ is the Lebesgue decomposition of $\mu$ with respect to $\nu$ into the absolutely continuous part $\rho\nu$ and
the singular part $\mu_s$, and $U'(\infty) :=  \lim_{r\to \infty} U(r)/r$. The space of all functions $U$ as above such that additionally
the function $\psi(\lambda):=e^{\lambda}U(e^{-\lambda})$ is convex on $\R$ is denoted by $\mathcal{DC}_\infty$. The only example of
an element of $\mathcal{DC}_\infty$ that we shall make use of is $U_\infty(r):= r\log r$. Using these notions, we now can give the
following definition (\cite[Def.\ 0.7]{LV09}):
\begin{Definition}\label{def:Ric_inf} 
A compact measured length space $(X,d,\nu)$ (with $\nu$ a probability measure) has $\infty$-Ricci curvature bounded below by $K\in \R$ if for 
all $\mu_0,\mu_1\in P_2(X)$ with $\supp(\mu_0)\sse \supp(\nu)$ and $\supp(\mu_1)\sse \supp(\nu)$ there exists a Wasserstein geodesic
$\{\mu_t\}_{t\in [0,1]}$ from $\mu_0$ to $\mu_1$ such that for all $U\in \mathcal{DC}_\infty$ and all $t\in [0,1]$ we have
\begin{equation}\label{eq:weak_displacement_convexity}
U_\nu(\mu_t) \le tU_\nu(\mu_1) + (1-t)U_\nu(\mu_0) - \frac{1}{2} \lambda_K(U) t(1-t)W_2(\mu_0,\mu_1)^2.
\end{equation}
Here, the function $\lambda_K$ is defined as follows: Let $p(r):=rU_+'(r) - U(r)$, $p(0)=0$, then $\lambda_K(U):=\inf_{r>0} K\frac{p(r)}{r}$
(see \cite[Def.\ 5.13]{LV09}).
\end{Definition}
This property is called \emph{weak displacement convexity}. We note that in the case of $U_\infty$ we obtain $\lambda_K(U_\infty) = K$.

For $(M,g)$ a compact connected Riemannian manifold with volume form $d\vol_g$, letting 
\begin{equation}\label{eq:nu_def}
\nu_g:= \frac{d\vol_g}{\vol_g(M)},
\end{equation}
and denoting 
by $d_g$ the Riemannian distance induced by $g$, \cite[Th.\ 0.12, Th.\ 7.3]{LV09} imply:
\begin{Theorem}\label{th:Ric_equivalence_smooth}
Let $M$ be a compact connected manifold with a Riemannian metric $g$ of regularity $C^2$. Then the measured length space $(M,d_g,\nu_g)$ has $\infty$-Ricci
curvature bounded below by $K\in \R$ if and only if $\Ric_g \ge Kg$. 
\end{Theorem}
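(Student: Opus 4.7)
The plan is to combine the optimal transport theory of Section \ref{sec:fundamentals_opt_transp} (which in the $C^2$ setting reduces to McCann's classical results) with a Jacobi-field computation along displacement interpolations, and then to localize the resulting entropy inequality to recover the pointwise Ricci estimate. Since $U_\infty'(\infty) = +\infty$ forces $U_\nu(\mu) = +\infty$ as soon as $\mu$ has a nontrivial singular part with respect to $\nu$, and since on a compact manifold $\nu$ has full support so that the constraint $\supp(\mu_i)\sse \supp(\nu)$ is automatic, it suffices to verify \eqref{eq:weak_displacement_convexity} for $\mu_0,\mu_1\in\pac(M)$. For such endpoints, Theorem \ref{th:opt_maps} supplies a $c$-concave potential $\psi$ so that the unique optimal transport is $T(x)=\exp_x(-\nabla\psi(x))$, and the displacement interpolation $\mu_t:=(T_t)_\#\mu_0$ with $T_t(x):=\exp_x(-t\nabla\psi(x))$ is the required Wasserstein geodesic.

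The heart of the forward direction ($\Ric_g\ge Kg \Rightarrow$ weak displacement convexity) is the analysis of the density $\rho_t$ of $\mu_t$ along this geodesic. Aleksandrov's theorem gives second differentiability of $\psi$ almost everywhere, so the Monge-Amp\`ere identity $\rho_t(T_t(x))J_t(x)=\rho_0(x)$, with $J_t(x):=|\det DT_t(x)|$, reduces $U_\nu(\mu_t)$ for $U=U_\infty$ to $\int_M \rho_0(x)\log(\rho_0(x)/J_t(x))\,d\vol_g(x) - \log\vol_g(M)$. Along each geodesic $\gamma_x:t\mapsto T_t(x)$ the Jacobian satisfies a Jacobi/Raychaudhuri inequality of the form $\tfrac{d^2}{dt^2}\log J_t(x)\le -\Ric_g(\dot\gamma_x,\dot\gamma_x)$; integrating against $\rho_0\,d\vol_g$ and applying $\Ric_g\ge Kg$ yields $\tfrac{d^2}{dt^2}U_\nu(\mu_t)\ge K\,W_2(\mu_0,\mu_1)^2$, which is equivalent to \eqref{eq:weak_displacement_convexity} for $U=U_\infty$. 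For general $U\in\mathcal{DC}_\infty$, convexity of $\lambda\mapsto e^\lambda U(e^{-\lambda})$ is precisely the algebraic input that converts the same Jacobian bound into \eqref{eq:weak_displacement_convexity} with $\lambda_K(U)$ on the right.

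For the converse (synthetic bound $\Rightarrow$ pointwise Ricci), I would localize. Fix $x_0\in M$ and a unit vector $v_0\in T_{x_0}M$, and for small $\eps>0$ take $\mu_0^\eps,\mu_1^\eps$ to be smooth approximations of uniform densities on tiny balls around $\exp_{x_0}(\mp\eps v_0)$, arranged so that their (unique) displacement interpolation is concentrated near the geodesic $t\mapsto \exp_{x_0}((2t-1)\eps v_0)$. Applying \eqref{eq:weak_displacement_convexity} with $U=U_\infty$, so that $\lambda_K(U_\infty)=K$, Taylor-expanding both sides in $\eps$ around $t=1/2$, and dividing by $\eps^2$ recovers the pointwise estimate $\Ric_g(v_0,v_0)\ge Kg(v_0,v_0)$ at $x_0$; varying $x_0$ and $v_0$ finishes the argument.

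The principal technical obstacle is the Jacobian analysis in the forward direction: one must justify second differentiability of $T_t$ almost everywhere, control the interpolation at and beyond the cut locus of $\exp$ so that the Jacobi equation remains well-defined, and check integrability of the resulting Raychaudhuri bound against $\rho_0\,d\vol_g$. The $c$-concavity of $\psi$, equivalently the semi-convexity of $x\mapsto c(x,y)-\psi(x)$, is what makes Aleksandrov's theorem applicable and keeps the whole argument tractable in the $C^2$ setting.
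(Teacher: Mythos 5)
The paper does not prove Theorem~\ref{th:Ric_equivalence_smooth}; it invokes it directly as a citation to Lott--Villani, \cite[Th.~0.12, Th.~7.3]{LV09}, with no argument supplied. Your sketch is essentially the standard proof of that cited result: $c$-concave potential from McCann's theory, Monge--Amp\`ere/Jacobian identity along the displacement interpolation, second-order Aleksandrov differentiability of the potential, the Raychaudhuri-type bound $\tfrac{d^2}{dt^2}\log J_t(x)\le -\Ric_g(\dot\gamma_x,\dot\gamma_x)$, integration against $\rho_0\,d\vol_g$, and the algebraic role of $\mathcal{DC}_\infty$ to pass from the Jacobian concavity to weak $\lambda_K(U)$-displacement convexity; the converse via localization near a chosen point and direction. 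This is the same route taken by \cite{LV09,vRS,CMcCS}, so it is not a competing approach but a reconstruction of the cited one.

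Two small points you should tighten if you wanted this to stand as an actual proof rather than an outline. First, the reduction to $\mu_0,\mu_1\in\pac(M)$ is not justified by the observation about $U_\infty'(\infty)=+\infty$: Definition~\ref{def:Ric_inf} quantifies over \emph{all} $U\in\mathcal{DC}_\infty$, including those with $U'(\infty)<\infty$ for which $U_\nu(\mu)$ is finite even when $\mu$ has a singular part; the reduction is correct but needs the density of $\pac$ in $P_2$ and the lower semicontinuity of $U_\nu$ (cf.\ \cite[Th.~B.33, Prop.~3.21]{LV09}). Second, in the converse direction the phrase ``arranged so that their displacement interpolation is concentrated near the geodesic'' hides the real work: one has to exhibit an explicit $c$-concave potential with prescribed gradient and vanishing Hessian at $x_0$ to make the Taylor expansion go through — exactly the $\phi_k$-construction and the $\tfrac{\partial^2}{\partial t^2}C_k(x_k,0)=-\Ric_{g_k}(v_k,v_k)$ computation that the paper carries out in Section~\ref{sec:synth_to_dist} for the $C^{1,1}$ case, and which is lifted from \cite[Th.~7.3, Lem.~7.4]{LV09}. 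Also a trivial sign slip: with $\nu=d\vol_g/\vol_g(M)$ one gets $U_\nu(\mu_t)=\int\rho_0\log(\rho_0/J_t)\,d\vol_g+\log\vol_g(M)$, not $-\log\vol_g(M)$ (the constant is immaterial to the convexity argument).
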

Let now $g$ be a Riemannian metric of regularity $C^1$ on the compact manifold $M$, and let $g_\eps:= g\star_M \rho_\eps$ be as in \eqref{eq:M-convolution}.
We first study the convergence of the measured length spaces $(M,d_{g_\eps},\vol_{g_\eps})$ towards $(M,d_g,d\vol_g)$:
\begin{Proposition}\label{prop:measured_GH_geps}
Let $M$ be a compact connected manifold with a $C^0$-Riemannian metric $g$, and let $g_\eps = g\star_M \rho_\eps$. Then 
$(M,d_{g_\eps},\vol_{g_\eps}) \to (M,d_g,d\vol_g)$ in the measured Gromov-Hausdorff sense.
\end{Proposition}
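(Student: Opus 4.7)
The plan is to establish measured Gromov-Hausdorff convergence by checking its two defining ingredients separately, taking advantage of the fact that all the $g_\eps$ and $g$ live on the \emph{same} compact underlying space $M$: uniform convergence $d_{g_\eps}\to d_g$ on $M\times M$, and weak convergence $\vol_{g_\eps}\to \vol_g$ as Radon measures. Once these are in place, the identity map itself will serve as the required sequence of $\eta_\eps$-Gromov-Hausdorff approximations.

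For the metric piece, I would first argue that $g_\eps\to g$ uniformly on $M$: continuity of $g$ and compactness of $M$ give uniform equicontinuity of the local components $g_{ij}$, so the standard convolution estimate, combined with a finite chart cover and the cut-off structure of \eqref{eq:M-convolution}, forces $(g_\eps)_{ij}\to g_{ij}$ uniformly in each chart. By compactness of the $g$-unit sphere bundle this upgrades to: for every $\delta>0$ there is $\eps_0>0$ such that
\[
(1-\delta)\,g_x(v,v)\le (g_\eps)_x(v,v)\le (1+\delta)\,g_x(v,v)\qquad (x\in M,\ v\in T_xM,\ \eps\in(0,\eps_0)).
\]
Taking square roots, integrating along absolutely continuous curves, and infimising over admissible paths gives $(1-\delta)^{1/2}d_g(x,y)\le d_{g_\eps}(x,y)\le(1+\delta)^{1/2}d_g(x,y)$. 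Since $d_g$ is bounded by $\diam_g(M)$, this yields $\|d_{g_\eps}-d_g\|_\infty\to 0$ on $M\times M$.

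For the measure piece, I would work locally in the chosen atlas with a subordinate partition of unity: $\vol_g$ is represented by $\sqrt{\det (g_{ij})}\,dx$ and likewise for $\vol_{g_\eps}$. Uniform convergence of the matrix entries together with uniform non-degeneracy of $g$ (again by compactness and continuity) forces $\det((g_\eps)_{ij})$ to stay bounded away from $0$ and to converge uniformly to $\det(g_{ij})$, so $\sqrt{\det((g_\eps)_{ij})}\to\sqrt{\det(g_{ij})}$ uniformly in each chart. Summing against the partition of unity yields $\vol_{g_\eps}\to\vol_g$ in the total variation norm on $M$, which is strictly stronger than weak convergence; in particular $\vol_{g_\eps}(M)\to\vol_g(M)$, so the normalised reference measures $\nu_\eps=\vol_{g_\eps}/\vol_{g_\eps}(M)$ used later (cf.\ \eqref{eq:nu_def}) also converge weakly to $\nu$.

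To conclude, setting $\eta_\eps:=\|d_{g_\eps}-d_g\|_\infty\to 0$, the identity map $\mathrm{id}:(M,d_{g_\eps})\to(M,d_g)$ is an $\eta_\eps$-isometry whose image is all of $M$, hence an $\eta_\eps$-Gromov-Hausdorff approximation; combined with the weak convergence of the (normalised) volume measures just established, this is exactly the definition of measured Gromov-Hausdorff convergence used in \cite{Vil09,LV09}. There is no real obstacle in the argument; the only point requiring mild care is to match the precise normalisation of the reference measure used by the cited definition of measured Gromov-Hausdorff convergence, but since $\vol_{g_\eps}(M)\to\vol_g(M)$ the unnormalised and normalised formulations are equivalent here.
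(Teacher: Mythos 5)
Your proof is correct, and it starts from exactly the same two geometric inputs as the paper, namely the uniform bilateral estimate $(1-\delta)g\le g_\eps\le(1+\delta)g$ on the tangent bundle and the convergence $\vol_{g_\eps}(M)\to\vol_g(M)$. The difference is in how the conclusion is drawn: the paper, having established these two facts, simply invokes a general theorem of Allen and Sormani (Theorem 1.2 of \cite{AS20}) that packages precisely this implication (uniform metric comparison plus volume convergence on a fixed compact manifold implies measured Gromov--Hausdorff convergence), whereas you carry the argument to completion by hand, deducing $\|d_{g_\eps}-d_g\|_\infty\to 0$ from the length-functional comparison, proving total variation convergence of the volume densities chartwise, and observing that the identity map is then an $\eta_\eps$-Gromov--Hausdorff approximation under which the measures push forward trivially. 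Your route is more elementary and self-contained, and has the side benefit of making explicit that the convergence of volumes is actually in total variation, not merely weak; the paper's route is shorter and delegates the final step to a citation. One small point worth making precise in a write-up: the notion of measured Gromov--Hausdorff convergence used downstream (via \cite{LV09}) is phrased for normalised reference measures, so it is good that you flagged the passage between $\vol_{g_\eps}$ and $\nu_\eps=\vol_{g_\eps}/\vol_{g_\eps}(M)$, which is indeed innocuous here because $\vol_{g_\eps}(M)\to\vol_g(M)>0$.
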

\begin{proof} Since $g_\eps \to g$ uniformly on $M$, for any $\delta>0$ there exists some $\eps_0>0$ such that
for any $\eps\in (0,\eps_0)$ we have
\begin{equation}\label{eq:distance_relations}
(1-\delta)g(v,v) \le g_\eps(v,v) \le (1+\delta)g(v,v) \qquad \forall v\in T_pM \ \forall p\in M.
\end{equation}
Moreover, also by uniform convergence, $\vol_{g_\eps}(M) \to \vol_{g}(M)$. The claim therefore follows from
\cite[Th.\ 1.2]{AS20}.
\end{proof}
Using this result, we can now show:
\begin{Theorem}\label{th:dist_to_synth}
Let $M$ be a compact connected manifold with a $C^1$-Riemannian metric $g$ that satisfies $\Ric_g \ge Kg$ in the distributional sense
(see \eqref{eq:dist_Ric_bound}). Then $(X,d_g,\nu_g)$ has $\infty$-Ricci curvature bounded below by $K$.
\end{Theorem}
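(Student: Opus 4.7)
The plan is to bootstrap from the smooth equivalence in Theorem \ref{th:Ric_equivalence_smooth} via regularisation, using the stability of $\infty$-Ricci bounds under measured Gromov--Hausdorff convergence.

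First I would feed the distributional hypothesis $\Ric_g\ge Kg$ into Theorem \ref{th:Ricci_bounds_approx}: this gives, for every $\delta>0$, an $\eps_0>0$ such that $\Ric_{g_\eps}\ge (K-\delta)g_\eps$ \emph{pointwise} on $M$ for every $\eps\in(0,\eps_0)$, where $g_\eps=g\star_M\rho_\eps$ is a smooth Riemannian metric on $M$ for $\eps$ small. Since $g_\eps$ is $C^\infty$, the classical-to-synthetic equivalence of Theorem \ref{th:Ric_equivalence_smooth} then yields that each measured length space $(M,d_{g_\eps},\nu_\eps)$, with $\nu_\eps:=d\vol_{g_\eps}/\vol_{g_\eps}(M)$, has $\infty$-Ricci curvature bounded below by $K-\delta$ in the sense of Definition \ref{def:Ric_inf}.

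Next, Proposition \ref{prop:measured_GH_geps} supplies convergence $(M,d_{g_\eps},\vol_{g_\eps})\to (M,d_g,d\vol_g)$ in the measured Gromov--Hausdorff sense, from which one immediately deduces the corresponding convergence of the normalised probability measures $\nu_\eps\to\nu$ (cf.\ \eqref{eq:nu_def}), since $\vol_{g_\eps}(M)\to \vol_g(M)$. I would then invoke the stability of $\infty$-Ricci lower bounds under measured Gromov--Hausdorff convergence of compact measured length spaces (\cite[Thm.~0.12, Thm.~4.15]{LV09}): along a sequence $\eps_k\downarrow 0$ and a sequence of thresholds $K-\delta_k$ with $\delta_k\downarrow 0$, the limit space $(M,d_g,\nu)$ inherits an $\infty$-Ricci lower bound of $K$. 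Finally, since the displacement-convexity inequality \eqref{eq:weak_displacement_convexity} for $U\in\mathcal{DC}_\infty$ is invariant under constant rescaling of the reference measure (as $U_{c\nu}$ differs from $U_\nu$ by an affine functional of $\mu_0,\mu_1$ that cancels in \eqref{eq:weak_displacement_convexity}), the bound for $(M,d_g,\nu)$ transfers to $(M,d_g,d\vol_g)$, giving the claim.

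The main obstacle is the stability step: one must verify that the Lott--Villani stability theorem applies in the present setting with the slightly varying lower bounds $K-\delta$, and that the measured Gromov--Hausdorff convergence of Proposition \ref{prop:measured_GH_geps} is of the exact form required. Because all the spaces share the common manifold $M$ and the metrics $d_{g_\eps}$ converge uniformly to $d_g$ by \eqref{eq:distance_relations} (so the identity maps are approximate isometries), the sequence of $\eps_k$-approximations gives direct convergence of optimal couplings and Wasserstein geodesics, making the limiting procedure straightforward: one chooses $\mu_0,\mu_1\in P_2(M)$ with supports in $\supp(\nu)=M$, lifts them (essentially unchanged, using the common underlying space) to the $\eps$-level, applies \eqref{eq:weak_displacement_convexity} there with constant $K-\delta$, and passes to the limit in each term of the inequality using lower semicontinuity of the entropies $U_\nu$ along $W_2$-convergent measures.
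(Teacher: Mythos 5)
Your proposal is correct and follows essentially the same path as the paper's proof: apply Theorem \ref{th:Ricci_bounds_approx} to get the pointwise bound $\Ric_{g_\eps}\ge(K-\delta)g_\eps$, invoke the smooth equivalence Theorem \ref{th:Ric_equivalence_smooth} for each $g_\eps$, combine with the measured Gromov--Hausdorff convergence of Proposition \ref{prop:measured_GH_geps}, and pass to the limit via the Lott--Villani stability theorem for weak displacement convexity before letting $\delta\to0$. Your explicit treatment of the normalisation of the reference measure is an added bit of care the paper leaves implicit, though the precise justification is that rescaling $\nu\mapsto c\nu$ corresponds to replacing $U$ by $r\mapsto cU(r/c)$, which lies in $\mathcal{DC}_\infty$ with the same $\lambda_K$, rather than the difference being an affine functional for general $U$.
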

\begin{proof} Fix $\delta>0$. By Theorem \ref{th:Ricci_bounds_approx}, there exists
some $\eps_0>0$ such that, for any $0<\eps<\eps_0$, the smooth metric $g_\eps$ satisfies $\Ric_{g_\eps} \ge (K-\delta) g_\eps$. 
Furthermore, Proposition \ref{prop:measured_GH_geps} shows that $(M,d_{g_\eps},\nu_{g_\eps}) \to (M,d_g,\nu_g)$ in the 
measured Gromov-Hausdorff sense. Also, due to Theorem \ref{th:Ric_equivalence_smooth}, each $(M,d_{g_\eps},\nu_{g_\eps})$
has $\infty$-Ricci curvature bounded below by $K-\delta$. We may now employ the stability of weak displacement convexity
\cite[Th.\ 4.15]{LV09} to conclude that for any $U\in \mathcal{DC}_\infty$, $U_{\nu_g}$ is weakly $\lambda_{K-\delta}$-displacement
convex. Since this holds for any $\delta>0$, the claim follows.
\end{proof}
\section{From synthetic to distributional lower Ricci bounds}\label{sec:synth_to_dist}
We now turn to the converse of the implication considered previously. When trying to infer distributional Ricci bounds 
from synthetic ones, one faces significantly greater technical difficulties. Indeed one can observe that when deriving classical
bounds for smooth (at least $C^2$-) Riemannian metrics from synthetic ones, the standard proofs (e.g., \cite[Th.\ 7.3]{LV09}, \cite[Th.\ 1.1]{vRS})
make use of analytic tools that cease to be available in regularities strictly below $C^2$, e.g., Jacobi fields, or estimates of curvature
quantities along geodesics, both of which would require the evaluation of curvature terms (which are, at best, only defined almost everywhere)
along curves, hence along null sets. Our strategy will again be to work with regularised smooth metrics $g_\eps = g\star_M \rho_\eps$, and 
we will assume the metric $g$ itself to be of class $C^{1,1}$ (continuously differentiable with Lipschitz continuous first derivatives).
This regularity class has turned out to be of considerable interest in applications to General Relativity, since it still guarantees,
on the one hand, unique solvability of the geodesic equations, and local boundedness of all curvature quantities 
(cf.\ \cite{hawkingc11,penrosec11,GGKS,Min15}). From the technical point of view, such metrics still provide enough control over curvature 
quantities of the approximating metrics $g_\eps$ to suitably adapt arguments from the smooth setting.

Suppose that $g$ is a $C^{1,1}$-Riemannian metric on a compact connected manifold $M$ such that $\Ric_g$ does not have $K$ as a lower distributional bound. By Theorem \ref{th:Ricci_bounds_approx} this
means that for some $\delta>0$ there exists a sequence $\eps_k \searrow 0$ and vectors $v_k \in T_{x_k}M$, 
such that 
\begin{equation}\label{eq:indirect_assumption}
\Ric_{g_{\eps_k}}(v_k,v_k)< (K-\delta)g_{\eps_k}(v_k,v_k).
\end{equation}
Henceforth we write $g_k$ for $g_{\eps_k}$. By compactness we may suppose that $x_k \to x_0$, $v_k\to v$,
$v\in T_{x_0}M$. For later use we note that \eqref{eq:indirect_assumption} remains true if we multiply $v_k$ by
a constant, so that
by re-scaling we may assume the norms of $v_k$, $v$ as small as
we wish. Our aim in this section 
is to derive a contradiction to $g$ possessing $K$ as a lower $\infty$-Ricci bound by constructing a Wasserstein geodesic 
along which displacement convexity \eqref{eq:weak_displacement_convexity} fails for the function $U_\infty(r) = r\log r$. 
To this end, we will follow the basic structure of the proof of \cite[Th.\ 7.3]{LV09}.

To begin with, for any $k$ we choose a smooth map $\phi_k:M\to \R$ such that
\begin{equation}\label{eq:phik_assumptions}
\nabla^{g_k}\phi_k(x_k)=-v_k, \quad{\text{ and }} \quad  \mathrm{Hess}^{g_k}(\phi_k)(x_k)=0,
\end{equation}
as follows:  
Working in local coordinates centered at $x_0$, we define 
\begin{equation*}\label{eq:phikdef}
\tilde \phi_k(x) := (g_k(x_k))_{il}v_k^i (x_k^l - x^l) - \frac{1}{2} {}^{g_k}\Gamma^l_{ij}(x_k) (g_k(x_k))_{rl} v^r_k (x^i-x_k^i)(x^j-x_k^j)
\end{equation*}
in a neighbourhood of $x_0=0$ and extend this function to all of $M$ as $\phi_k:=\zeta \cdot \tilde \phi_k$, where 
$\zeta$ is a plateau function that equals $1$ in a neighbourhood of $x_0$ (to be further specified below). Here, ${}^{g_k}\Gamma^l_{ij}$ denote the Christoffel symbols of $g_k$
in the local coordinates. Clearly, $\phi_k \to \phi = \zeta \tilde \phi$ in $C^\infty(M)$, where
\begin{equation*}
\tilde \phi(x) = (g(x_0))_{il}v^i (x_0^l - x^l) - \frac{1}{2} {}^{g}\Gamma^l_{ij}(x_0) (g(x_0))_{rl} v^r (x^i-x_0^i)(x^j-x_0^j),
\end{equation*}
and we have $\nabla^{g}\phi(x_0)=-v$ and $\mathrm{Hess}^{g}(\phi)(x_0)=0$.

As in Section \ref{sec:fundamentals_opt_transp}, let $c(x,y):=d(x,y)^2/2$, with $d$ the Riemannian distance induced by $g$
and analogously $c_k(x,y):=d_k(x,y)^2/2$ for the metric $g_k$. We next want to show that there exists some constant $\kappa>0$
such that, if $\|\nabla^{g_k} \phi_k\|_\infty \le \kappa$ (which can be achieved by making 
$\|v_k\|_{g_k}$ uniformly small), then
$\phi_k$ is $c_k$-concave for each $k\in \N$. To this end, we employ the following result from 
\cite{glaudo}:
\begin{Theorem}\label{th:glaudo} (\cite[Th.\ 1.1]{glaudo}) Let $(M,g)$ be a compact Riemannian manifold with sectional curvature bounded from above
by $K\ge 0$. Then there exists a constant $C_*:=C_*(\mathrm{inj}(M),$ $K,\mathrm{diam}(M))>0$ such that, for any $\eps>0$, if 
$\phi\in C^2(M,\R)$ satisfies 
\[
\| \nabla \phi \|_\infty \le \min\left( \frac{\eps}{3K \mathrm{diam}(M)},C_* \right) \qquad \text{ and } \qquad \mathrm{Hess}(\phi) \le (1-\eps)g,
\]
then $\phi$ is $c$-concave.
\end{Theorem}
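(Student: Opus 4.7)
The plan is to establish $c$-concavity of $\phi$ by constructing, for each $x \in M$, an explicit supporting function of the form $z \mapsto c(z, y_x) - \alpha_x$ touching $\phi$ from above at $x$. Set $y_x := \exp_x(-\nabla\phi(x))$ and $\alpha_x := c(x, y_x) - \phi(x)$; then $c$-concavity reduces to verifying the global inequality $F_x(z) := c(z, y_x) - \phi(z) \ge -\alpha_x$ for every $z \in M$ and every $x \in M$. Using that $\nabla_z c(\cdot, y_x)|_{z=x} = -\exp_x^{-1}(y_x) = \nabla\phi(x)$, one sees $\nabla F_x(x) = 0$, so $x$ is always a critical point of $F_x$. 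By compactness, $F_x$ attains its global minimum at some $z^* = z^*(x) \in M$, and the task becomes to show $z^* = x$.

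At the minimiser $z^*$ the identity $\nabla F_x(z^*) = 0$ rearranges to $y_x = \exp_{z^*}(-\nabla\phi(z^*))$, hence $d(z^*, y_x) = |\nabla\phi(z^*)| \le \|\nabla\phi\|_\infty \le C_*$. The same bound applies to $d(x, y_x) = |\nabla\phi(x)| \le C_*$, so both $x$ and $z^*$ lie in the ball $B(y_x, C_*)$. Choosing $C_*$ small relative to $\mathrm{inj}(M)$ confines both points to a normal neighbourhood of $y_x$ on which $c(\cdot, y_x)$ is smooth and the exponential map based at $y_x$ is a diffeomorphism.

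On this normal neighbourhood, Rauch--Toponogov comparison under $\mathrm{sec} \le K$ yields the lower Hessian estimate
\begin{equation*}
\mathrm{Hess}(c(\cdot, y_x))(p)(v,v) \;\ge\; \bigl(1 - \tfrac{K}{3}\, d(p, y_x)^2 + \cdots\bigr)\,|v|^2,
\end{equation*}
which, combined with the hypothesis $\mathrm{Hess}(\phi) \le (1-\eps)g$, gives $\mathrm{Hess}(F_x) \ge (\eps - E)\,g$ uniformly on the region of interest, where $E$ is a curvature-induced error of order $K \cdot (\text{relevant distance})^2$. The hypothesis $\|\nabla\phi\|_\infty \le \eps/(3K\,\diam(M))$ together with a suitable choice of $C_*$ is calibrated exactly to absorb $E$ into the $\eps$-margin, making $\mathrm{Hess}(F_x)$ strictly positive throughout the region containing both $x$ and $z^*$. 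Strict convexity on this convex normal neighbourhood then forces the two critical points $x$ and $z^*$ to coincide, yielding the desired global inequality and hence $c$-concavity of $\phi$.

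The main obstacle I anticipate is the tension between the \emph{global} nature of the $c$-concavity conclusion and the inherently \emph{local} character of the Hessian comparison for $c(\cdot, y_x)$, which is valid only inside the injectivity radius of $y_x$. The role of $C_*$ is to trap the global minimiser $z^*$ inside this smooth region via the gradient bound, while the specific form $\eps/(3K\,\diam(M))$ of the bound on $\|\nabla\phi\|_\infty$ encodes the precise bilinear interaction allowing the curvature correction $\sim K r^2$ along geodesics of length up to $\diam(M)$ to be controlled against the $\eps$-margin of strict concavity in the Hessian hypothesis.
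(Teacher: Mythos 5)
This statement is quoted verbatim from Glaudo's paper \cite{glaudo} and is \emph{not} proved in the paper under review---it is invoked as a black box---so there is no in-paper proof to compare against. That said, your reconstruction captures what is indeed the natural strategy for this result: reduce $c$-concavity to the inequality $F_x(z):=c(z,y_x)-\phi(z)\ge F_x(x)$ for all $x,z$, with $y_x=\exp_x(-\nabla\phi(x))$; note $\nabla F_x(x)=0$; trap the global minimiser $z^*$ of $F_x$ near $y_x$ via $d(z^*,y_x)=|\nabla\phi(z^*)|\le\|\nabla\phi\|_\infty$ (and likewise for $x$); and then use Hessian comparison under $\sec\le K$ to make $F_x$ convex on a geodesically convex ball about $y_x$ containing both $x$ and $z^*$. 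Note that one does not in fact need \emph{strict} convexity and $z^*=x$; mere convexity of $F_x$ on that ball together with $\nabla F_x(x)=0$ already gives $F_x(z^*)\ge F_x(x)$, hence equality, which is all that is required.

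Two points deserve care. First, a sign issue: under $\sec\le K$, the tangential eigenvalue of $\mathrm{Hess}\,c(\cdot,y_x)$ at distance $r$ is bounded below by $\sqrt{K}r\cot(\sqrt{K}r)$, and the Taylor expansion $s\cot s = 1-\tfrac{s^2}{3}-\tfrac{s^4}{45}-\cdots$ has all non-positive higher coefficients, so $1-\tfrac{K r^2}{3}$ is an \emph{upper} bound for this quantity, not a lower one. One must use an honest lower bound such as $1-Kr^2$ valid for $\sqrt{K}\,r$ in a bounded range fixed by $C_*$. Second, and more substantively, your closing claim that the factor $\diam(M)$ in $\eps/(3K\diam(M))$ is calibrated to control curvature corrections ``along geodesics of length up to $\diam(M)$'' is inconsistent with your own argument: everything in your proof is localised to a ball of radius $O(\|\nabla\phi\|_\infty)\le O(C_*)$ about $y_x$, and the only geodesics that appear have length $O(\|\nabla\phi\|_\infty)$. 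For the Hessian step alone a bound of the type $\|\nabla\phi\|_\infty\lesssim\sqrt{\eps/K}$ (with no $\diam$) would already suffice; the stated hypothesis is simply stronger, so your argument still closes (with $C_*$ chosen below the convexity radius, which is where the dependence on $\mathrm{inj}(M)$, $K$, $\diam(M)$ genuinely enters). But the heuristic about where $\diam(M)$ comes from is not supported by the proof you gave, and should be dropped or replaced by an explicit verification of the quantitative step.
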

Thus to establish the above claim we need to see that when applying this result to $g_k$, all the quantities used in the estimates can be 
controlled uniformly in $k$. For the injectivity radii $\mathrm{inj}_{g_k}$ this follows from \cite[Th.\ 4.7]{CGT}, to the effect that a uniform  
bound on the Riemann tensor combined with a uniform lower bound on the volumes of distance balls of radius $1$ gives a lower bound on the 
injectivity radius (cf.\ \cite[Th.\ 3.3]{KSS14} and the discussion following it). Moreover, both the Hessians with respect to $g_k$ and the 
$g_k$-diameters of $M$ converge uniformly, so the claim follows. We henceforth assume that the $\|v_k\|_{g_k}$ are sufficently small
to guarantee $c_k$-concavity of $\phi_k$.

Let $\eta_0^{(k)} := \vol_{g_k}(V)^{-1} 1_V$ be a uniform distribution on some open neighbourhood $V$ of $x_0$ (to be specified more precisely below,
but in any case such that $\phi_k = \tilde\phi_k$ and $\phi = \tilde \phi$ on $V$), 
$\eta_0 := \vol_{g}(V)^{-1} 1_V$, $\mu_0^{(k)} := \eta_0^{(k)} d\vol_{g_k}$, and $\mu_0 := \eta_0 d\vol_{g}$. 
 Then $\mu_0^{(k)}\ll d\vol_{g_k}$ and $\mu_0\ll d\vol_{g}$ are probability measures.
Set $F_t^{(k)}(y):=\exp^{g_k}_y(-t\nabla^{g_k} \phi_k)$, $F_t(y):=\exp^{g}_y(-t \nabla^g \phi)$, 
and $F^{(k)} := F_1^{(k)}$, $F := F_1$. Since $\phi_k$ is $c_k$-concave, \cite[Th.\ 8]{McC01}, \cite[Cor.\ 5.2]{CMcCS} show that, 
for each $t\in [0,1]$, $\mu_t^{(k)} := (F_{t}^{(k)})_\#\mu_0^{(k)}$ is a $c_k$-optimal transport from $\mu_0^{(k)}$ to $\mu_t^{(k)}$. Consequently,
$t\mapsto \mu_t^{(k)}$ is a geodesic for the $2$-Wasserstein distance induced by $g_k$. Furthermore, denoting by $W^{(k)}_2$ and $W_2$ the Wasserstein distances
induced by $d_{g_k}$ and $d_g$, respectively, \eqref{eq:distance_relations} shows that by picking a subsequence we may assume without loss of generality that
for each $k$ we have $(1-1/k)W_2(\mu,\nu) \le W^{(k)}_2(\mu,\nu) \le (1+1/k)W_2(\mu,\nu)$ for all $\mu, \nu\in P_2(M)$. 
Since $P_2(M)$ is compact (\cite[Rem.\ 6.19]{Vil09}), we are precisely in the setting of the following auxilliary result:
\begin{Lemma}\label{lem:geod_in_metric} Let $(X,d)$ be a compact metric space and let $d_k$ be a sequence of metrics on $X$ such that, for each $k\in \N$,
\begin{equation}\label{eq:dkdinequality}
(1-1/k)d(x,y) \le d_k(x,y) \le (1+1/k)d(x,y)
\end{equation}
for all $x,y,\in X$. Let $\gamma_k: [0,1] \to X$ be a $d_k$-geodesic. Then there exists a subsequence $(\gamma_{k_l})$ of $(\gamma_k)$ that converges
uniformly to a $d$-geodesic $\gamma: [0,1]\to X$ with $d$-length $L_d(\gamma) = \lim_{l\to\infty} L_{d_{k_l}}(\gamma_{k_l})$.
\end{Lemma}
\begin{proof} By compactness, we may suppose that $\gamma_k(0)\to p\in X$ and $\gamma_k(1)\to q\in X$. Also, we may assume the
$\gamma_k$ to be parametrized proportional to arclength, so $d_k(\gamma_k(s),\gamma_k(t)) = d_k(\gamma_k(0),\gamma_k(1))|s-t|$.
Then by \eqref{eq:dkdinequality} we obtain
\[
d(\gamma_k(s),\gamma_k(t)) \le 2d(p,q)|s-t|
\]
for $k$ large and any $s,t\in [0,1]$, showing that $(\gamma_k)_k$ is uniformly equicontinuous, and pointwise bounded as $X$ is compact. 
The existence of a uniformly convergent subsequence therefore follows from the Arzela-Ascoli theorem. For simplicity, we denote
this subsequence again by $(\gamma_k)_k$.

Next, fix $\eps>0$ and denote by $\sigma = \{t_0=0 < \dots < t_{n_\sigma}=1\}$ any subdivision of the interval $[0,1]$, then
there exists some $k_0 = k_0(\eps,\sigma)$ such that for $k\ge k_0$ we have
\begin{align*}
\sum_{i=1}^{n_\sigma} d(\gamma(t_{i-1}),\gamma(t_i)) \le \sum_{i=1}^{n_\sigma} d_k(\gamma_k(t_{i-1}),\gamma_k(t_i)) + \eps
= L_{d_k}(\gamma_k) + \eps \le d(\gamma(0),\gamma(1)) + 2\eps.
\end{align*}
Consequently,
\[
d(\gamma(0),\gamma(1)) \le \sum_{i=1}^{n_\sigma} d(\gamma(t_{i-1}),\gamma(t_i)) \le  d(\gamma(0),\gamma(1)) + 2\eps.
\]
Letting $\eps\to0$ and taking the supremum over all $\sigma$, we obtain $d(\gamma(0),\gamma(1)) = L_d(\gamma)$.
Finally, $L_{d_k}(\gamma_k) = d_k(\gamma_k(0),\gamma_k(1)) \to d(\gamma(0),\gamma(1)) = L_d(\gamma)$.
\end{proof}
Thus, up to picking another subsequence, we conclude that $\mu_t^{(k)}$ converges to a $W_2$-geodesic $\chi_t$.
Since $\sqrt{\det{g_k}} \to \sqrt{\det{g}}$ uniformly on $M$ and $\vol_{g_k}(V)\to \vol_g(V)$, by \cite[Th.\ 6.9]{Vil09}  $\mu_0^{(k)} \to \mu_0$ in $W_2$. From the above it therefore follows that, in particular, $\chi_1$ is an optimal transport of $\mu_0$, hence by Theorem \ref{th:opt_maps} is of the 
form $\chi_1 = T_\#\mu_0$, where $T(y) = \exp_y( -\nabla \psi(y))$ for some $c$-concave function $\psi: M\to \R$. 
Note that henceforth we will often simply write $\exp$ instead of $\exp^g$, $\nabla$ instead of $\nabla^g$, etc.

We wish to relate this function $\psi$ to the limiting function $\phi$ from above. To this end, we 
will make use of results on the strong differentiability of the exponential map of a $C^{1,1}$-metric
over the zero-section of $TM$ (\cite[Th.\ 3]{Min15}). Denote by $E$ the map, defined on a neighbourhood 
of $M\times \{0\}$ in $TM$, which maps $(y,w)$ to $(y,\exp_y(w))$. 
By \cite[Th.\ 3]{Min15}, $E$ is strongly differentiable in any point in $M\times \{0\}$. In particular,
choosing, as above, local coordinates such that $x_0=0$, the strong differential of $E$ in $(0,0)$
is given by
\[
L:= \begin{pmatrix}
    I & 0 \\ I & I
    \end{pmatrix},
\]
Indeed, the estimate given below \cite[(39)]{Min15} shows that, for $\max(\|y\|,\|z\|,\|v\|,\|w\|) < \delta$,
$\delta$ sufficently small, we have
\begin{equation}\label{eq:E_strongly_differentiable}
\|E(y,v) - E(z,w) - L\cdot (y-z,v-w)^\intercal \| \le \max(\|y-z\|,\|v-w\|) \cdot O(h(\delta)).
\end{equation}
Here, $h(\delta)\to 0$ as $\delta\to 0$. We pick $\delta>0$ such that $h(\delta)<1/4$. 
Now $F_t(y) = \mathrm{pr}_2\circ E(y,t\nabla \phi(y))$, and rescaling $v$ suitably
we can assume that $\|\nabla \phi(y)\|, \|\nabla\phi(z)\| \le \delta$ and $\|D^2\phi\|_\infty\le 1/2$, so that
$\|\nabla \phi(y) - \nabla \phi(z)\| \le \frac{1}{2} \|y-z\|$ for $y, z$ in a $\delta$-ball around $0$.
Then \eqref{eq:E_strongly_differentiable} gives
\begin{equation*}
\|F_t(y) - F_t(z) - (y-z) - t(\nabla\phi(y) - \nabla\phi(z)) \| \le h(\delta) \|y-z\|.
\end{equation*}
Thus
\begin{equation*}
\|y-z - t(\nabla\phi(y) - \nabla\phi(z))\| - h(\delta) \|y-z\| \le \|F_t(y) - F_t(z) \|
\end{equation*}
for all $t\in [0,1]$. By the choices made above this entails
\begin{equation*}
\|F_t(y) - F_t(z) \| = \|\exp_y(-t\nabla\phi(y)) - \exp_z(-t\nabla\phi(z))\| \ge \frac{1}{4}\|y-z\|.
\end{equation*}
It follows that there exists a neighbourhood of $x_0$ on which, for any $t\in [0,1]$, $F_t$ is a bi-Lipschitz homeomorphism.
We note that the same calculation can be carried out for each $F^{(k)}_t$, and in fact both $\delta$ and $h(\delta)$
can be chosen uniformly for all $F^{(k)}_t$ and $F_t$ since they only depend on the bounds on the Christoffel symbols
and the Lipschitz constants for the exponential maps (cf.\ \cite[(27),(28),(33)]{Min15}), all of which are uniformly 
controlled since $g\in C^{1,1}$.

From here, the standard change-of-variables formula for bi-Lipschitz maps implies that given a measure $\xi_0 d\vol_g$, its
push-forward under $F_t$ possesses a density with respect to $d\vol_g$, namely the map
\begin{equation}\label{eq:Ft_density}
x\mapsto \xi_0(F_t^{-1}(x)) \left.\frac{1}{\det DF_t(y)}\right|_{y=F_t^{-1}(x)}.
\end{equation}
An analogous equation holds for each $F_t^{(k)}$. We also note that due to the uniform bound we have on the Lipschitz constants
of the $g_k$-exponential maps it follows directly from the proofs of the main theorem in \cite{leach} and that of \cite[Th.\ 3]{Min15} 
that the inverses of $F_t$ and $F_t^{(k)}$ are defined on a neighbourhood of $F_t(x_0)$ resp.\ $F_t^{(k)}(x_0)$ whose size is independent of 
$t\in [0,1]$ and of $k$.  
Rescaling the $v_k$ we may therefore assume that all inverses of the $F_t^{(k)}$ are defined on a neighbourhood $\tilde V$ of $x_0=F_0(x_0)
=F_0^{(k)}(x_0)$, and
that they converge locally uniformly to $F_t^{-1}$ on $\tilde V$ (cf.\ \cite{BDF91}). Finally, we pick a neighbourhood $V$ of $x_0$ whose
image under all $F_t^{(k)}$ and $F_t$ lies in $\tilde V$.

Taking this $V$ in the definition of $\mu_t^{(k)}$ above, we want to show that the densities of the $\mu_t^{(k)}$ 
converge to the density of $\mu_t:=(F_t)_\sharp\mu_0$. This will 
require us to uniformly control the functions $\det DF^{(k)}_t$ from below, a property that will also be needed in 
a later stage of the proof. We will derive such an estimate by Riccati comparison of the Jacobian differentials of the 
exponential maps of the $g_k$ (cf., e.g., \cite[Sec.\ 1.5]{daiwei} or \cite[Sec.\ 7]{LV09}).

Fix some $k\in \N$ and, given $y\in M$, consider the $g_k$-geodesic $\gamma(t):= F_t^{(k)}(y) = \exp^{g_k}_y(-t \nabla^{g_k}\phi_k(y))$. Also, let $e_i$ be
a $g_k$-orthonormal basis at $y$, parallely transported along $\gamma$.
Let $J_i$ be the Jacobi-field 
\begin{equation}\label{eq:Jacobi-field}
J_i(t):= DF^{(k)}_t(e_i)
\end{equation}
along $\gamma$. Then $J_i(0)=e_i$ and $J_i'(0)= \mathrm{Hess}^{g_k}(\phi_k)_y(e_i)$
Setting $J_{ij}:= \lara{J_i}{e_j}_{g_k}$, the matrix $J:=(J_{ij})$ satisfies the initial value problem
\begin{equation*}
J''(t) + K(t) J(t) = 0, \quad J(0) = I_n, \quad J'(0) = \mathrm{Hess}^{g_k}(\phi_k)_y.
\end{equation*}
Here, $K_{ij}(t) = \lara{R^{g_k}(e_i(t),\dot\gamma(t))\dot \gamma(t)}{e_j(t)}_{g_k(\gamma(t))}$. Let $U(t) := J'(t)\cdot J^{-1}(t)$, 
$\cJ(t) := \det J(t) = \det DF^{(k)}_t(y)$, and
\begin{equation*}
h(t) := \log \cJ(t) = \log \det DF^{(k)}_t(y).
\end{equation*}
Then
\begin{equation*}
\mathrm{tr} (U) = \frac{d}{dt}(\log \cJ) = \dot h,
\end{equation*}
and $U$ satisfies the matrix Riccati equation (cf.\ \cite[(1.5.2)]{daiwei})
\begin{equation*}
\dot U + U^2 + K = 0
\end{equation*}
with initial condition $U(0) = \mathrm{Hess}^{g_k}(\phi_k)_y$. Since $g\in C^{1,1}$, $K$ is bounded, independently of $k$ (cf.\ Proposition
\ref{prop:starM_convolution_Ricci} and Remark \ref{rem:general_curvature_bounds}), so there exists some constant $H>0$ such that
\begin{equation*}
-H\cdot I_n \le K \le H\cdot I_n
\end{equation*}
in the sense of symmetric bilinear forms. Consider now the comparison equations
\begin{align*}
\dot U_{-H} + U_{-H}^2 - H\cdot I_n &= 0\\
\dot U_H + U_H^2 + H\cdot I_n &= 0
\end{align*}
with identical initial condition $U_H(0) = U_{-H}(0) = U(0)$. The main theorem of \cite{EH} then implies that
\begin{equation*}
U_H(t) \le U(t) \le U_{-H}(t)
\end{equation*}
on any common existence interval $[0,\bar t]$ of $U_H$, $U_{-H}$.

To solve the comparison equations it suffices to take a $g_k$-orthogonal matrix $T$ that diagonalizes the initial condition.
Then $\tilde U_H := T^{-1}U_HT$ solves the same equation, but with diagonal initial conditions, hence the system decouples and
can be solved explicitly, namely $\tilde U_H(t) = \mathrm{diag}(s_H^{1}(t),\dots,s_H^n(t))$, 
$\tilde U_{-H}(t) = \mathrm{diag}(s_{-H}^{1}(t),\dots,s_{-H}^n(t))$ for smooth functions 
$s_{H}^{i}$, $s_{-H}^{i}$ on $[0,\bar t]$, given by 
\begin{align*}
s^i_H&=-\sqrt{H}\tan\left(t\sqrt{H}-\arctan(s^i_H(0)/\sqrt{H})\right), \\
s^i_{-H}&=\sqrt{H}\tanh\left(t\sqrt{H}+\tanh^{-1}(s^i_{-H}(0)/\sqrt{H})\right).
\end{align*}
Transforming back it follows that
\[
\min_{1\le i\le n} s_{H}^{i}(t) I_n \le U(t) \le \max_{1\le i\le n} s_{-H}^{i}(t) I_n
\]
on $[0,\bar t]$. Consequently,
\[
n \min_{1\le i\le n} s_{H}^{i}(t)  \le \mathrm{tr}(U(t)) = \dot h(t) \le n \max_{1\le i\le n} s_{-H}^{i}(t)
\]
on $[0,\bar t]$, which in turn implies a uniform bound on $h(t)$, independently of $k$.

We conclude that, in particular, 
\begin{equation}\label{eq:logdetFk_bounded}
\log(\det DF^{(k)}_t(y)) \ \text{is bounded, uniformly in } \ k\in \N, \ y\in M, \ t\in [0,\bar t].
\end{equation}
Hence also $\det DF^{(k)}_t(y))$ is uniformly bounded below. From the explicit form of $F_t^{(k)}$ it follows that, rescaling the $v_k$
by a factor independent of $k$, we may assume without loss of generality that $\bar t = 1$.

We now make the following technical assumption (cf.\ Remark \ref{rem:discussion} below for a discussion): 

{\bf Assumption:} \emph{There exists a (Lebesgue-) null set $N\sse M$ 
such that, for each $y\in M\setminus N$,
\begin{equation}\label{eq:additional_assumption}
DF_t^{(k)}(y) \to DF_t(y), 
\end{equation}
uniformly for $t\in [0,1]$.}

Together with the above, \eqref{eq:Ft_density} for $F_t^{(k)}$, and the fact that $\vol_{g_k} \to \vol_g$ uniformly on $M$ imply that $\mu_t^{(k)} \to \mu_t$ weakly for each $t\in [0,1]$.
Since, on the other hand, convergence in the Wasserstein sense implies weak convergence, it follows that $\chi_t = \mu_t$
for all $t\in [0,1]$. 

To continue the argument, we now require the validity of the following two standard results also for $C^{1,1}$-Riemannian metrics:
First, the proof of \cite[Cor.\ 3.22]{AG} relies on \cite[Th.\ 3.10]{AG}, which clearly applies to the current setting, as 
well as on \cite[Rem.\ 2.35]{AG}, which corresponds to our Lemma \ref{lem:subdiff}, so remains true for $g\in C^{1,1}$.
Furthermore, \cite[Cor.\ 5.2]{CMcCS} is derived from \cite[Lem.\ 5.1]{CMcCS}, which carries over verbatim to the $C^{1,1}$-setting,
together with McCann's characterization of optimal transport, which in this
regularity takes the form of Theorem \ref{th:opt_maps}, showing that also this result holds for $g\in C^{1,1}$.

Combining these two results we conclude that with $\psi$ the 
$c$-concave function introduced after the proof of Lemma \ref{lem:geod_in_metric} 
we have $\chi_t = (H_t)_\sharp \mu_0$, where $H_t = y\mapsto \exp_y(-t\nabla \psi(y))$. 
By Theorem \ref{th:opt_maps} we 
therefore have, for each $t\in [0,1]$, that $\exp_y(-t\nabla \phi(y)) = \exp_y(-t\nabla \psi(y))$
$\mu_0$-almost everywhere and hence Lebesgue-almost everywhere on $V$. We now again use the fact that by \cite[Th.\ 3]{Min15}, 
the map $TM\to M\times M$, $v_y \mapsto \exp^g_y(v_y)$ is a bi-Lipschitz homeomorphism on a neighbourhood of the zero-section in $TM$.
For $t>0$ small, both $t\nabla \phi$ and $t\nabla \psi$ lie in this neighbourhood, hence coincide almost everywhere on $V$.
Since, in all further calculations, $\phi$
enters only via $\nabla\phi$, and only measures supported in $V$ will be considered, we may without loss of generality henceforth assume that $\phi$ itself is $c$-concave. We also note that, by what was said above, $\mu_t$ is the unique Wasserstein geodesic between 
$\mu_0$ and $\mu_1$.

Now that we know $c$-concavity of $\phi$, we consider more general initial densities $\eta_0$ than the constant one we used
above. Thus, let $\mu_0=\eta_0 d\vol_g$ be any probability measure on $V$ with density $\eta_0$ with respect to $d\vol_g$.
Then since $\phi$ is $c$-concave, $\mu_t := (F_t)_\#\mu_0$ is a $d_g$-Wasserstein geodesic, and in fact is the unique
such geodesic by what was said above. Setting  
$U \equiv U_\infty = r \log r$, by Definition \ref{def:Ric_inf} we therefore have for all $t\in [0,1]$:
\begin{equation}\label{eq:Wasserstein_Ricci}
U_\nu(\mu_t) \le tU_\nu(\mu_1) + (1-t)U_\nu(\mu_0) - \frac{1}{2} K t(1-t)W_2(\mu_0,\mu_1)^2,
\end{equation}
where $\nu$ was given in \eqref{eq:nu_def}. As in the proof of \cite[Th.\ 7.3]{LV09} (but now using the standard transformation
formula for bi-Lipschitz homeomorphisms for $F_t$), we have
\begin{equation*}
U_\nu(\mu_t) = \int_M U\Big(\vol_g(M)\cdot \frac{\eta_0(y)}{\det(DF_t)(y)} \Big) \det(DF_t)(y) \frac{d\vol_g(y)}{\vol_g(M)}.
\end{equation*}
Then setting 
\begin{equation*}
C(y,t) := - \log(\vol_g(M)) + \log(\det DF_t(y)),
\end{equation*}
and noting that also for $g\in C^{1,1}$ we have $W_2(\mu_0,\mu_1)^2 = \int_M |\nabla\phi(y)|^2 \eta_0(y)\, d\vol_g(y)$,
it follows that \eqref{eq:Wasserstein_Ricci} is equivalent to
\begin{equation}\label{eq:Wasserstein_Ricci_equiv}
\begin{split}
-\int_M \eta_0(y) & C(y,t)  d\vol_g(y) \\
&\le \int_M \eta_0(y)\Big(-tC(y,1) - (1-t)C(y,0) -\frac{1}{2}Kt(1-t) |\nabla\phi(y)|^2 \Big)\,d\vol_g(y).
\end{split}
\end{equation}
Since this inequality is invariant under non-negative scaling, $\eta_0$ can be any non-negative Borel measurable function on $V$ here.

Arguing in a coordinate chart around $x_0$ (hence setting $M=\R^n$ and $x_0=0$ for the moment), we now let $\zeta: \R^n\to [0,1]$ be smooth with compact support in 
the unit ball and $\zeta(0)=1$ and set, for $\eps \in (0,1]$, $\eta_{0\eps}(x) := \eps\cdot \zeta(x/\eps)$, and $\eta_{0\eps}^{(k)}(x) := \eps\cdot \zeta((x-x_k)/\eps)$. Then
\begin{equation}\label{eq:eta0eps}
|\eta_{0\eps}(x) - \eta_{0\eps}^{(k)}(x)| \le \|D\zeta\|_{\infty}\cdot \|x_k\| \to 0 \qquad (k\to \infty),
\end{equation}
uniformly in $x$ and $\eps$.   Also, setting $C_k(y,t) := - \log(\vol_{g_k}(M)) + \log(\det DF^{(k)}_t(y))$, \eqref{eq:logdetFk_bounded} and \eqref{eq:additional_assumption} 
imply that $|C(y,t)-C_k(y,t)|$ is uniformly bounded in $y, t, k$ and converges pointwise to zero for almost all $y$, uniformly in $t$, as $k\to \infty$.
Together with the fact that $\nabla^{g_k}\phi_k \to \nabla^g\phi$ uniformly on $M$, we conclude that
\begin{align*}
\int_M \eta_{0\eps}^{(k)}(y)  C_k(y,t)  d\vol_{g_k}(y) \to \int_M \eta_{0\eps}(y)  C(y,t)  d\vol_g(y)
\end{align*}
as well as
\begin{align*}
\int_M \eta_{0\eps}^{(k)}(y)\big(-tC_k(y,1) - (1-t)C_k(y,0) -\frac{1}{2}Kt(1-t) |\nabla^{g_k}\phi_k(y)|^2 \big)\,d\vol_{g_k}(y) \\
\to
\int_M \eta_0(y)\big(-tC(y,1) - (1-t)C(y,0) -\frac{1}{2}Kt(1-t) |\nabla\phi(y)|^2 \big)\,d\vol_g(y),
\end{align*}
as $k\to \infty$, uniformly in $t\in [0,1]$ and in $\eps\in (0,1]$. Due to \eqref{eq:Wasserstein_Ricci_equiv} we conclude that there exists
some $k_0$ such that, for all $k\ge k_0$, any $\eps\in (0,1]$ and any $t\in [0,1]$ we have
\begin{equation}\label{eq:Wasserstein_Ricci_equiv_contra}
\begin{split}
&- \int_M  \eta_{0\eps}^{(k)}(y)  C_k(y,t)  d\vol_{g_k}(y) \\
&\le \int_M \eta_{0\eps}^{(k)}(y)\Big(-tC_k(y,1) - (1-t)C_k(y,0) -\frac{1}{2}\Big(K-\frac{\delta}{2}\Big) t(1-t) |\nabla^{g_k}\phi_k(y)|^2\Big) \,d\vol_{g_k}(y).
\end{split}
\end{equation}
Letting $\eps\to 0$, the support of $\eta_{0\eps}^{(k)}$ can be concentrated arbitrarily close to $x_k$. The Fundamental Lemma of the calculus of variations yields that
\begin{equation}\label{eq:Wasserstein_Ricci_infinitesimal}
   - C_k(x_k,t) \leq -t C_k(x_k,1) - (1-t) C_k(x_k,0) - \frac{1}{2}\Big(K-\frac{\delta}{2}\Big) t(1-t) |\nabla^{g_k}\phi_k(x_k)|^2
\end{equation}
for all $t\in[0,1]$. Using that $|\nabla^{g_k}\phi_k(x_k)|^2 = g_k(v_k,v_k)$ and adding $-\frac{1}{2}\Big(K-\frac{\delta}{2}\Big) t^2 g_k(v_k,v_k)$ on both sides of \eqref{eq:Wasserstein_Ricci_infinitesimal},
we see that the function
\[
  - C_k(x_k,t) -\frac{1}{2}\Big(K-\frac{\delta}{2}\Big) t^2 g_k(v_k,v_k)
\]
is convex. In particular,
\begin{equation}\label{eq:convexity_at_x_k}
-\frac{\partial^2}{\partial t^2} C_k(x_k,0) - \Big(K-\frac{\delta}{2}\Big) g_k(v_k,v_k) \geq 0.
\end{equation}

We now note that  
\begin{equation}\label{eq:Cdoubleprime}
\frac{\partial^2}{\partial t^2} C_k(x_k,0) = - \Ric_{g_k}(v_k,v_k)
\end{equation}
To see this we follow the line of argument in \cite[Lem.\ 7.4]{LV09}. Fixing $k$ and setting $D(t):= \det^{\frac{1}{n}}(DF_t^{(k)}(x_k))$, 
we have $C(t):=C_k(x_k,t) = \log(\vol_{g_k}(M)) + n\log (D(t))$. By \cite[(7.24), (7.30)]{LV09} and \eqref{eq:phik_assumptions},
\begin{equation*}
\frac{D'(0)}{D(0)} = -\frac{1}{n} \Delta_{g_k}\phi_k(x_k) = 0,
\end{equation*}
hence 
\begin{equation}\label{eq:Cpp}
C''(0) = n \frac{D''(0)}{D(0)} - n\Big(\frac{D'(0)}{D(0)} \Big)^2 = n \frac{D''(0)}{D(0)}.
\end{equation}
On the other hand, by \cite[(7.16)]{LV09},
\begin{equation}\label{eq:Dpp}
\frac{D''(t)}{D(t)} = \frac{1}{n^2} (\mathrm{Tr}(R))^2 - \frac{1}{n}\mathrm{Tr}(R^2) - \frac{1}{n}\Ric_{g_k}((F_t^{(k)})'(x_k),(F_t^{(k)})'(x_k)).
\end{equation}
Here, taking $J_i$ as in \eqref{eq:Jacobi-field} (with $y=x_k$), $R(t)$ is the matrix defined by $J_i'(t) = \sum_j R(t)^j_i J_j(t)$. Thus
$R(0)$ is the matrix of the $g_k$-Hessian of $\phi_k$ at $x_k$ and therefore vanishes by our assumptions \eqref{eq:phik_assumptions} on $\phi_k$, 
which also imply that $(t\mapsto F_t^{(k)}(x_k))'(0) = v_k$. Consequently, \eqref{eq:Dpp} gives 
\[
\frac{D''(0)}{D(0)} = -\frac{1}{n}\Ric_{g_k}(v_k,v_k),
\]
and substituting this into \eqref{eq:Cpp} proves \eqref{eq:Cdoubleprime}.

Finally, combining \eqref{eq:convexity_at_x_k} with \eqref{eq:Cdoubleprime} results in
\[
\Ric_{g_k}(v_k,v_k) = -\frac{\partial^2}{\partial t^2} C_k(x_k,0)  \ge \big(K-\frac{\delta}{2}\Big)g_k(v_k,v_k) > \Big(K - \delta\Big) g_k(v_k,v_k)
\]
for all $k\ge k_0$, giving the desired contradiction to \eqref{eq:indirect_assumption}. Thus we arrive at the following result:
\begin{Theorem}\label{th:synth_to_dist}
Let $M$ be a compact connected manifold with a $C^{1,1}$-Riemannian metric $g$ such that $(X,d_g,d\vol_g/\vol_g(M))$ has $\infty$-Ricci curvature bounded below by $K$.
Assume further that for some subsequence of $g\star_M \rho_\eps$, \eqref{eq:additional_assumption} is satisfied.
Then also $\Ric_g \ge Kg$ in the distributional sense. 
\end{Theorem}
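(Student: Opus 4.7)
The plan is to argue by contradiction, using Theorem \ref{th:Ricci_bounds_approx} to translate the failure of a distributional Ricci bound into a pointwise failure on the smooth regularisations. If $\Ric_g \geq K g$ fails, then along a subsequence $g_k := g \star_M \rho_{\eps_k}$ (which we arrange to also satisfy \eqref{eq:additional_assumption}), there exist $x_k \to x_0$ and $v_k \to v \in T_{x_0}M$ with
\[
\Ric_{g_k}(v_k,v_k) < (K - \delta) g_k(v_k,v_k)
\]
for some fixed $\delta > 0$, and these may be rescaled to have arbitrarily small norm. I then want to fabricate a concrete $W_2$-geodesic in $(M, d_g)$ whose displacement convexity (in the sense of \eqref{eq:weak_displacement_convexity} with $U = U_\infty$) fails, contradicting the synthetic bound.

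The construction step is to write down, in a coordinate chart at $x_0$, a smooth potential $\phi_k$ of quadratic form (truncated by a plateau function) realising $\nabla^{g_k}\phi_k(x_k) = -v_k$ and $\mathrm{Hess}^{g_k}(\phi_k)(x_k) = 0$; these converge in $C^\infty$ to an analogous $\phi$ for $g$. Uniform control on injectivity radius (via \cite{CGT}), diameter, and curvature of the $g_k$ -- all of which is available because $g \in C^{1,1}$ -- lets us invoke Theorem \ref{th:glaudo} to get $c_k$-concavity of each $\phi_k$ after a further rescaling of $v_k$. Then $F_t^{(k)}(y) := \exp^{g_k}_y(-t \nabla^{g_k}\phi_k(y))$ induces Wasserstein geodesics $\mu_t^{(k)} := (F_t^{(k)})_\#\mu_0^{(k)}$ starting from a uniform distribution on a neighbourhood $V$ of $x_0$.

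The next step is to pass to the limit. Minguzzi's strong differentiability of the exponential map at the zero section provides a fixed neighbourhood $\tilde V \supseteq F_t^{(k)}(V)$ on which all $F_t^{(k)}$ and $F_t$ are bi-Lipschitz with uniform constants. Riccati comparison on the Jacobi endomorphism $U := J' J^{-1}$, with $J(t) := DF_t^{(k)}$, then yields a uniform bound on $\log \det DF_t^{(k)}$ and hence a uniform positive lower bound on $\det DF_t^{(k)}$. Combined with the convergence assumption \eqref{eq:additional_assumption} and $\vol_{g_k} \to \vol_g$ uniformly, the change-of-variables formula \eqref{eq:Ft_density} gives weak convergence $\mu_t^{(k)} \to \mu_t := (F_t)_\# \mu_0$. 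Because $W_2$-convergence also implies weak convergence and the metrics $d_{g_k}$ satisfy \eqref{eq:dkdinequality} up to a further subsequence, the Arzel\`a--Ascoli lemma established above identifies the Wasserstein limit $\chi_t$ with $\mu_t$. The Kantorovich-duality consequences transferred to $C^{1,1}$ in Section \ref{sec:fundamentals_opt_transp} (\cite[Cor.\ 2.22]{AG}, \cite[Cor.\ 5.2]{CMcCS}) together with bi-Lipschitzness of $\exp^g$ near the zero section then force $\phi$ itself (after redefinition outside a negligible set) to be $c$-concave, so $\mu_t = (F_t)_\# \mu_0$ is the unique $W_2$-geodesic from $\mu_0$ to $\mu_1$ for any admissible $\mu_0$.

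The final step extracts pointwise information. Feeding $U = U_\infty$ into \eqref{eq:weak_displacement_convexity} and changing variables via $F_t$ yields the integral inequality \eqref{eq:Wasserstein_Ricci_equiv}, valid for arbitrary non-negative $\eta_0$ supported in $V$. Replacing $g$-objects by $g_k$-objects at the cost of a $\delta/2$ error in the curvature term (justified by the uniform convergences $C_k \to C$, $\nabla^{g_k}\phi_k \to \nabla^g\phi$, and $\vol_{g_k} \to \vol_g$), and then choosing $\eta_0$ to be a bump function concentrating at $x_k$, the Fundamental Lemma of the calculus of variations produces \eqref{eq:Wasserstein_Ricci_infinitesimal}, which expresses convexity in $t$ of $-C_k(x_k,t) - \tfrac{1}{2}(K - \tfrac{\delta}{2}) t^2 g_k(v_k,v_k)$. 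The Jacobi-field/Bochner calculation of \cite[Lem.\ 7.4]{LV09}, exploiting the vanishing Hessian at $x_k$, then gives $\partial_t^2 C_k(x_k, 0) = -\Ric_{g_k}(v_k,v_k)$, so the convexity forces
\[
\Ric_{g_k}(v_k, v_k) \geq \bigl(K - \tfrac{\delta}{2}\bigr) g_k(v_k, v_k) > (K - \delta) g_k(v_k, v_k),
\]
contradicting the starting inequality.

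The principal technical obstacles I foresee are twofold. First, uniformly controlling injectivity radius, Jacobian determinants, and bi-Lipschitz constants of the $F_t^{(k)}$ is what forces the $C^{1,1}$ hypothesis, and each such estimate rests on a separate nontrivial input (\cite{CGT}, Riccati comparison, Minguzzi's strong differentiability). Second, and more seriously, identifying the Wasserstein limit $\chi_t$ with $(F_t)_\# \mu_0$ is exactly where the extra assumption \eqref{eq:additional_assumption} enters: without some form of almost-everywhere $C^1$-convergence of $F_t^{(k)}$, the change-of-variables formula cannot be passed to the limit and the identification of $\chi_t$ breaks down -- which explains why the theorem carries a hypothesis beyond mere $C^{1,1}$-regularity of $g$.
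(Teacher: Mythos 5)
Your proposal reconstructs the paper's proof essentially step for step: contradiction via Theorem~\ref{th:Ricci_bounds_approx}, construction of the truncated quadratic potentials $\phi_k$, Glaudo's $c_k$-concavity criterion with uniform control on injectivity radius and curvature, Minguzzi's strong differentiability of $\exp$ at the zero section, Riccati comparison for uniform Jacobian bounds, the role of \eqref{eq:additional_assumption} in identifying the Wasserstein limit, the passage to $g_k$-quantities with a $\delta/2$ error, concentration plus the Fundamental Lemma, and the Jacobi-field evaluation of $\partial_t^2 C_k(x_k,0)$. This matches the paper's argument, including the correct identification of \eqref{eq:additional_assumption} as the step that breaks down without the extra hypothesis.
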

\begin{remark}\label{rem:discussion} There is a large class of examples of $C^{1,1}$-Riemannian metrics that fail to be $C^2$ but still satisfy the additional
assumption \eqref{eq:additional_assumption} in Theorem \ref{th:synth_to_dist}. Indeed,
since the $g_k$ are obtained by convolution, for any $C^{1,1}$-metric that is $C^2$ outside a closed
zero set \eqref{eq:additional_assumption} clearly holds. In particular, this situation occurs whenever two $C^2$-Riemannian metrics are glued along a 
closed embedded submanifold of codimension greater or equal than $1$ in such a way that the resulting metric is $C^{1,1}$.
\end{remark}
To conclude this paper, let us mention some directions of further research that naturally suggest themselves based on the results derived above.
A main question is whether the optimal transport approach and the distributional method cease to produce equivalent lower Ricci curvature bounds when further
lowering the regularity of the metric, i.e., if the theories ``branch'' in the direction of lower differentiability. That this may indeed be the
case is supported by two observations: On the one hand, as has also become apparent in Section \ref{sec:synth_to_dist}, already for $C^{1,1}$
metrics one is deprived of many of the standard tools of Riemannian geometry and regularisation methods can only partially make up for this loss.
Whereas this may be seen as a mere technical inconvenience, it should be taken into account that geometric properties that are taken for granted 
for $C^2$-metrics in fact cease to hold below this regularity. 
As examples we mention the coming apart of the notions of local distance minimisers and geodesics (solutions of the geodesic equation)
for $C^{1,\alpha}$-metrics ($\alpha\in (0,1)$), cf.\ \cite{HW,SS18}, or the more obvious fact that geodesic branching is a generic
phenomenon for $C^1$-metrics. 
It is also an open question whether the well-known equivalence of various entropy conditions (e.g., that of $\mathrm{CD}(K,\infty)$ with the $\infty$-Ricci bounds
employed here) continues to hold below $C^2$,
i.e., if the synthetic approach itself might branch when lowering the differentiability class of the metric. 

The methods used in this paper are not specifically tied to metrics of Riemannian signature. In fact, our regularisation results were derived 
from constructions that had first been developed in the context of generalising the classical singularity theorems of general relativity, hence
can also be used to analyse metrics of Lorentzian (or indeed arbitrary) signature. Also in this direction, similar questions arise: It has
been noted in recent years that a number of standard results of Lorentzian causality theory may lose their validity for metrics of low regularity
(and certainly do so below the Lipschitz class), cf., e.g., the phenomenon of `bubbling' metrics (\cite{CG,GKSS}). The Lorentzian synthetic framework \cite{lls}
employed in \cite{CM22} avoids such pathologies explicitly, but at the prize of excluding certain continuous but non-Lipschitz spacetimes
from consideration. Finally, it will be of interest to compare 
the synthetic approach to generalising the singularity theorems of general relativity (\cite{CM22}) with the distributional one
(\cite{hawkingc11,penrosec11,GGKS,G20,KOSS22}).

\medskip\noindent
{\bf Acknowledgements.} We thank Christian Ketterer for helpful discussions. 
This work was supported by project P 33594 of the Austrian Science Fund FWF.

\end{document}